\DeclareMathOperator\erf{erf}
\newtheorem{remark}[theorem]{Remark}
\newcommand{\be}{\begin{equation}}
\newcommand{\ee}{\end{equation}}
\newcommand{\ba}{\begin{aligned}}
\newcommand{\ea}{\end{aligned}}
\newcommand{\bea}{\begin{eqnarray}}
\newcommand{\eea}{\end{eqnarray}}
\newcommand{\bc}{{\boldsymbol c}}
\newcommand{\bk}{{\boldsymbol k}}
\newcommand{\x}{{\boldsymbol x}}
\newcommand{\y}{{\boldsymbol y}}
\newcommand{\m}{{\boldsymbol m}}
\newcommand{\bphi}{{\boldsymbol{\phi}}}
\newcommand{\bpsi}{{\boldsymbol{\psi}}}
\newcommand{\Scale}[2][1]{\scalebox{#1}{$\m@th#2$}}
\newcommand{\sbtr}{%
  \vcenter{\hbox{\Scale[0.65]{\blacktriangleright}}}%
}
\def\ccm{Center for Computational Mathematics, Flatiron Institute, Simons Foundation,
  New York, New York 10010}
\def\nyu{Courant Institute of Mathematical Sciences,
  New York University, New York, New York 10012}
\def\thu{Yau Mathematical Sciences Center,
  Tsinghua University, Beijing China 100084}
\def\papertitle{A new version of the adaptive fast Gauss transform for discrete and continuous
sources}
\title{\papertitle}
\author{Leslie Greengard%
  \thanks{\ccm\, \& \nyu\,
    ({\tt greengard@courant.nyu.edu}).}
  \and
  Shidong Jiang\thanks{\ccm\,
    ({\tt sjiang@flatironinstitute.org}).} 
  \and
  Manas Rachh\thanks{\ccm\,
    ({\tt mrachh@flatironinstitute.org}).} 
  \and
  Jun Wang\thanks{\thu\,
    ({\tt jwang2020@tsinghua.edu.cn}).} 
}
\newcommand{\multiline}[1]{%
  \begin{tabularx}{\dimexpr\linewidth-\ALG@thistlm}[t]{@{}X@{}}
    #1
  \end{tabularx}
}
\algrenewcommand\algorithmicfor{For}
\algrenewcommand\algorithmicif{If}
\algrenewcommand{\algorithmiccomment}[1]{\hfill (#1)}
\begin{document}

\maketitle

\begin{abstract}
We present a new version of the fast Gauss transform (FGT) for discrete 
and continuous sources.
Classical Hermite expansions are avoided entirely, 
making use only of the plane-wave representation of the Gaussian kernel 
and a new hierarchical merging scheme. For continuous source 
distributions sampled on adaptive tensor-product grids, we exploit 
the separable structure of the Gaussian kernel to accelerate the 
computation.  For discrete sources, the 
scheme relies on the nonuniform fast Fourier transform 
(NUFFT) to construct near field plane wave representations. 
The scheme has been implemented for either free-space or periodic 
boundary conditions.  In many regimes, the speed is comparable to or 
better than that of the conventional FFT {\em in work per gridpoint}, 
despite being fully adaptive.
\end{abstract}

\begin{keywords}
  fast Gauss transform, Fourier spectral approximation, nonuniform fast Fourier transform,
  level-restricted adaptive tree.
\end{keywords}

\begin{AMS}
31A10, 65F30, 65E05, 65Y20
\end{AMS}

\pagestyle{myheadings}
\thispagestyle{plain}
\markboth{L. Greengard, S. Jiang, M. Rachh, and J. Wang}
{Adaptive fast Gauss transform}

\section{Introduction} \label{introduction}
In this paper, we consider the evaluation of the discrete and continuous Gauss transforms:
\be
u_i=\sum_{j=1}^N G(\x_i-\y_j;\delta)q_j,
\qquad \x_i, \y_j \in B, \qquad i=1,\ldots M,
\label{pgt}
\ee
and
\be
u(\x)=\int_B G(\x-\y;\delta)\sigma(\y)d\y,
\label{bgt}
\ee
where $B = [-\frac{1}{2},\frac{1}{2}]^d$ is the 
unit box centered at the origin in $\mathbb{R}^d$. For free space problems,
the Gaussian kernel is given by 
\be
G(\x;\delta)=e^{-\frac{\|\x\|^2}{\delta}} \,  ,
\label{gkernel}
\ee
while for periodic problems, 
\be
G(\x;\delta)= \sum_{{\bf j} \in {\mathbb{Z}^d}} e^{-\frac{\|\x + {\bf j}\|^2}{\delta}} \, ,
\label{gper_kernel}
\ee
where ${\mathbb{Z}}^d$ denotes the $d$-dimensional integer lattice.
This task is ubiquitous in computational physics, engineering, and statistics
and we do not seek to review the various applications here.

The original fast Gauss transform (FGT) \cite{greengard1991fgt} is 
a single level scheme 
that superimposes
on $B$ a uniform grid of boxes of side length $O(\sqrt{\delta})$ - 
ensuring that interactions
are negligible outside a modest range of near neighbors. 
More concretely, if the uniform grid spacing is exactly $\sqrt{\delta}$,
then after four neighbors in any direction, the Gaussian has decayed
to approximately $10^{-7}$ and an expansion of the field induced by 
the Gaussians in terms of Hermite functions requires an expansion
of about $10^d$ terms to achieve seven digit accuracy within the region
of interest. Using this ``low-rank representation",
the FGT is a linear scaling
algorithm, requiring only $O(M+N)$ work for the evaluation of \eqref{pgt}.

\begin{remark}
Reducing the number of boxes within the region of interest
can be accomplished by setting the box size to $4 \sqrt{\delta}$, so that
only near neighbors need be considered. However, the necessary 
expansion size then increases to more than $40^d$, and the algorithm
is less efficient.
\end{remark}

Since the creation of the original FGT, several improvements and 
extensions have been made (see, for example, 
\cite{elgammal2003,greengard1998nfgt,lee2006dtfgt,
sampath2010pfgt,spivak2010sisc,strain1994sisc,tausch2009sisc,
veerapaneni2008jcp,wang2018sisc}).
Of particular relevance are 
\cite{greengard1998nfgt,sampath2010pfgt,spivak2010sisc}, where 
Fourier-based methods were
introduced, providing efficient approximations which can be 
translated in diagonal form - yielding significant accelerations.
Also relevant is \cite{wang2018sisc}, which uses
both Hermite and Fourier-based expansions.
It is hierarchical, fully adaptive, 
insensitive to $\delta$, and able
to compute transforms with discrete sources,
volume sources and densities supported on 
boundaries. The algorithmic approach in 
\cite{wang2018sisc} follows that of the 
hierarchical fast multipole method (FMM) \cite{greengard1987jcp}, 
separating near and far field interactions.

Here, we introduce an alternate
scheme, which turns out to be both simpler and faster.
The new scheme eliminates the Hermite and local expansions that
have been central to previous FGTs. Instead, it relies only on the Fourier
(spectral) approximation of the Gaussian. 
For discrete sources, given a parameter $s$, an adaptive tree
is constructed by hierarchically dividing every box containing 
more than $s$ particles
into $2^d$ child boxes, until it reaches the so-called ``cutoff'' level 
$l_c$ with box dimension approximately 
$\sqrt{\log\left( \frac{3}{\epsilon} \right)} \, \sqrt{\delta}$
(discussed in \cref{sec:pfgt}). 
Briefly, the cutoff level is chosen so that only interactions between 
neighboring
boxes need to be considered for computing the potential to the specified
accuracy $\epsilon$.

The interactions between boxes at the cutoff level which 
contain more than $s$ sources and targets are handled using 
plane-wave expansions. This representation is
particularly efficient, since translation is diagonal in the plane wave 
basis (as they are eigenfunctions
of the translation operator). 
Since one only needs to consider the interactions between
nearest neighbors, the number of diagonal translations is minimal, 
especially when compared with the FMM-like FGT in \cite{wang2018sisc}.

A key observation used in the new scheme is that the formation and 
evaluation of these plane-wave interactions
can be accelerated by the nonuniform fast Fourier transforms (NUFFTs) 
\cite{nufft2,nufft3,nufft6,finufftlib} at a cost 
$O(N_F\log N_F + N_P)$, where $N_F$ is the number of discrete
Fourier modes and $N_P$ is the number of source and target points.
As noted in \cite{finufft}, 
the prefactor in front of $N_P$ is roughly $(D+1)^d$, where $D$ is the 
number of requested digits. Thus, the cost of forming and evaluating 
plane-wave expansions
is quite modest in terms of the number of required 
floating-point operations. 
The method described in this paper is unusual in that it uses 
the NUFFT in a spatially adaptive manner. 
We do {\em not} use the NUFFT to 
implement discrete convolution globally, rather to improve the constants
in the FGT framework.

For continuous sources, the adaptive tree is designed to
resolve the given density function $\sigma$,
and thus cannot be terminated at the cutoff level. 
While one could first apply a quadrature rule to the 
integral in \cref{bgt} and then call the new point FGT,
we can develop a much faster scheme by making use of the 
tensor product structure of the grid and the separable nature of the 
Gaussian kernel.  To be more precise,
integrating a multidimensional Gaussian against a product of 
polynomials $p_1(x_1) \dots p_d(x_d)$ can be done by building
one-dimensional tables of Gaussian convolution integrals for
a given $\delta$ and a fixed set of target points. 
Moreover, both forming and
evaluating plane-wave expansions can also be accelerated by separation of 
variables, since the input density is tabulated on a tensor product grid in 
each leaf box. Detailed analysis of the operation
count shows that the diagonal translations needed to merge and split 
the plane-wave expansions in the adaptive tree hierarchy are negligible
contributions to the overall computational cost. 
The systematic use of the tensor product structure of the continuous
FGT \cref{bgt} leads to 
\cref{alg4}, below, with single core performance of $5-20$ million
points per second for $d=2$ and 
$1-7$ million
points per second for $d=3$, depending on the precision.

An additional contribution of the present paper is an analysis of
the continuous transform \cref{bgt} in terms of {\em resolution} of the
density $\sigma(\x)$ and the output potential $u(\x)$.
While the former may be resolved on the adaptive input grid, it does {\em not}
follow that the latter is resolved on output. 
In general, it is necessary to build a new adaptive tree that is guaranteed
to resolve the potential.
We explain the reason for this in \cref{sec4.2} and describe an algorithm
that creates a suitable output tree.

The paper is organized as follows. We begin by reviewing the plane-wave approximation of
a Gaussian in \cref{sec:prelim}. We introduce the new discrete (point) FGT 
in \cref{sec:pfgt} and the new continuous (volume) FGT in \cref{sec:vfgt}. 
Periodic boundary conditions are discussed in \cref{sec:periodic}. Timing results are
presented in \cref{sec:numer}
and we conclude in \cref{sec:conclusions}
with a discussion of potential extensions of the present scheme.

\section{Mathematical preliminaries} \label{sec:prelim}

Since the Gaussian in higher dimensions is the 
product of one-dimensional Gaussians, we need only consider 
the one-dimensional case in detail. 
By virtue of the triangle inequality,
it is straightforward to see that the error in 
approximating the Gaussian
in $\mathbb{R}^d$ as the product of
one-dimensional approximations amplifies the error by 
a factor of roughly $d$.

\subsection{Spectral approximation of the Gaussian} \label{sec:spectral}

We will make use of the well-known Fourier transform relation
\be
G(x; \delta) = e^{-x^2/\delta}=\sqrt{\frac{\delta}{4\pi}}\int_{-\infty}^{\infty} e^{-k^2 \delta/4} e^{ikx}dk = \frac{1}{\sqrt{2\pi}} \int_{-\infty}^{\infty} \hat{G}(k;\delta) e^{ikx}dk \, .
\ee
Discretizing this integral with the trapezoidal rule yields an
approximation of the Gaussian by a finite sum of plane waves
\cite{greengard1998nfgt,spivak2010sisc}.
By using the fact that the effective range of $x$ is finite for any specified precision,
a modest number of quadrature nodes is sufficient to guarantee uniformly high accuracy.
This is made precise in the following lemma.

\begin{theorem}\label{thm:planewave}
Let $\varepsilon<0.1$ be a prescribed tolerance, and let us define a ``cutoff''
length $D_0$ by the formula
\be\label{D0}
D_0=\sqrt{\log\left(\frac{3}{\varepsilon}\right)}.
\ee
Then, for any $R\ge D_0$,
\be\label{pwerrorestimate}
\left|G(x;\delta) - \frac{h}{2\sqrt{\pi}}\sum_{m=-M}^{M-1} e^{-m^2 h^2/4}e^{imhx/\sqrt{\delta}}\right|
\le \varepsilon, \quad |x|\le R\sqrt{\delta},
\ee
where $M$ and the stepsize $h$ are given by the formulas
\be
h=\frac{2\pi}{R+D_0}, \qquad M=\left\lceil\frac{2D_0}{h}\right\rceil
=\left\lceil\frac{D_0(R+D_0)}{\pi}\right\rceil.
\label{pwterms}
\ee
\end{theorem}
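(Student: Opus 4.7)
The plan is to split the error into a truncation error (from restricting the infinite trapezoidal sum to $|m|\le M$) and a discretization error (from replacing the integral by the infinite trapezoidal sum), and to bound each piece separately. It will be convenient to first rescale. Substituting $k=s/\sqrt{\delta}$ in the Fourier representation gives
\begin{equation*}
G(x;\delta)=\frac{1}{2\sqrt{\pi}}\int_{-\infty}^{\infty}e^{-s^2/4}\,e^{isy}\,ds,
\qquad y:=x/\sqrt{\delta},\ |y|\le R,
\end{equation*}
so the proposed approximation is exactly the truncated trapezoidal rule at nodes $s=mh$ applied to $f(s):=e^{-s^2/4}e^{isy}/(2\sqrt{\pi})$. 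I write the error as $E_{\mathrm{disc}}+E_{\mathrm{trunc}}$, where $E_{\mathrm{disc}}$ is the gap between $G(x;\delta)$ and the full infinite sum, and $E_{\mathrm{trunc}}$ is the tail with $|m|\ge M$.

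For the discretization error I would apply the Poisson summation formula. Since $\widehat{f}(\omega)=e^{-(\omega-y)^2}$ up to the normalizing constant, Poisson summation yields
\begin{equation*}
\frac{h}{2\sqrt{\pi}}\sum_{m=-\infty}^{\infty}e^{-m^2h^2/4}e^{imhy}
=\sum_{n=-\infty}^{\infty}e^{-(2\pi n/h - y)^2},
\end{equation*}
whose $n=0$ term is $e^{-y^2}=G(x;\delta)$. Thus $E_{\mathrm{disc}}=-\sum_{n\ne 0}e^{-(2\pi n/h-y)^2}$. With the choice $2\pi/h=R+D_0$ and $|y|\le R$, the triangle inequality gives $|2\pi n/h-y|\ge |n|(R+D_0)-R\ge D_0+(|n|-1)(R+D_0)$, so each aliasing term is controlled by $e^{-D_0^2}$ times a geometric factor. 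Summing a convergent geometric series and invoking $e^{-D_0^2}=\varepsilon/3$ bounds $|E_{\mathrm{disc}}|$ by roughly $2\varepsilon/3$ (with room to spare, since the series decays very quickly for $R\ge D_0$).

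For the truncation error, since $Mh\ge 2D_0$, I would compare the symmetric tail $\frac{h}{2\sqrt{\pi}}\sum_{|m|\ge M}e^{-m^2h^2/4}$ to the tail integral $\frac{1}{\sqrt{\pi}}\int_{2D_0}^{\infty}e^{-s^2/4}\,ds=\operatorname{erfc}(D_0)$, using monotonicity of the Gaussian to dominate the trapezoidal tail by the integral tail. The standard bound $\operatorname{erfc}(t)\le e^{-t^2}/(t\sqrt{\pi})$ together with $e^{-D_0^2}=\varepsilon/3$ and $D_0\ge\sqrt{\log 30}>\tfrac{1}{\sqrt{\pi}}$ (using $\varepsilon<0.1$) then yields $|E_{\mathrm{trunc}}|\le\varepsilon/3$. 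Adding the two bounds gives $|E_{\mathrm{disc}}|+|E_{\mathrm{trunc}}|\le \varepsilon$.

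The main obstacle I expect is bookkeeping the constants so that the two error pieces actually sum to $\varepsilon$ rather than a slightly larger multiple: in particular, one has to verify that the geometric-series corrections to the Poisson tail and the polynomial prefactor in the $\operatorname{erfc}$ estimate are absorbed by the $\tfrac13$ and $\tfrac23$ budgets, and this is where the hypothesis $\varepsilon<0.1$ (which forces $D_0$ to be bounded away from $1$) and the choice $R\ge D_0$ (which forces $2\pi/h\ge 2D_0$) are both used. No new ideas are needed beyond Poisson summation and complementary-error-function estimates; the definitions of $h$ and $M$ in \cref{pwterms} are precisely calibrated so that the aliasing and truncation scales both equal $D_0$.
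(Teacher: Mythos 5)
Your overall architecture is the same as the paper's: rescale, apply Poisson summation to identify the infinite trapezoidal sum with the periodized Gaussian, split the error into an aliasing piece (the $n\ne 0$ images, budget $2\varepsilon/3$) and a truncation piece (the $|m|\ge M$ tail, budget $\varepsilon/3$), and close the argument using $e^{-D_0^2}=\varepsilon/3$ together with the calibrations $2\pi/h-R=D_0$ and $Mh\ge 2D_0$. Your treatment of the aliasing error is essentially the paper's verbatim, and your remarks about where $\varepsilon<0.1$ and $R\ge D_0$ enter are accurate.

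The step that does not survive scrutiny is the truncation bound. For a decreasing integrand $g$, the left-endpoint sum $h\sum_{m\ge M}g(mh)$ \emph{dominates} $\int_{Mh}^{\infty}g$ rather than being dominated by it; monotonicity only yields $h\sum_{m\ge M}g(mh)\le\int_{(M-1)h}^{\infty}g$. This off-by-one-step would be harmless if $h$ were small, but here $h=2\pi/(R+D_0)$ can be as large as $\pi/D_0$, i.e., the grid spacing is comparable to the decay scale of $e^{-s^2/4}$ near $s=2D_0$, so the loss is a factor of order $\pi$, not $1+o(1)$. Concretely, when $Mh=2D_0$ exactly (which occurs for admissible parameters, e.g.\ $\varepsilon=0.1$, $R\approx 3.27$, giving $M=3$, $h\approx 1.23$), the first term of the tail alone is $\tfrac{h}{\sqrt{\pi}}e^{-D_0^2}\approx\tfrac{\sqrt{\pi}}{D_0}e^{-D_0^2}$, which already exceeds $\operatorname{erfc}(D_0)\approx\tfrac{e^{-D_0^2}}{D_0\sqrt{\pi}}$ by about a factor of $\pi$; numerically the tail sum is $\approx 0.025$ while $\operatorname{erfc}(D_0)\approx 0.009$. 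So the proposed domination of the trapezoidal tail by the integral tail is false, and your $\varepsilon/3$ budget for $E_{\mathrm{trunc}}$ is not established by this route. The claim $E_{\mathrm{trunc}}\le\varepsilon/3$ is nevertheless true (in the example above the tail is $0.025<\varepsilon/3\approx 0.033$); the paper proves it by bounding the tail directly as a geometric series, $\sum_{k\ge 0}e^{-(M+k)^2h^2/4}\le e^{-M^2h^2/4}\big/\bigl(1-e^{-Mh^2/2}\bigr)$, and then controlling the prefactor using only $Mh\ge 2D_0$, $h\le\pi/D_0$, and $D_0\ge\sqrt{\log 30}$. You should replace the $\operatorname{erfc}$ comparison with that geometric-series estimate (or at minimum peel off the $m=M$ term before comparing the remainder to an integral).
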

\begin{proof}
From the Poisson summation formula, it follows that
\be
\ba
\sum_{n=-\infty}^{\infty}G\left(x+\frac{2\pi n \sqrt{\delta}}{h}; \delta \right)
&=\frac{h}{\sqrt{2\pi \delta}}\sum_{m=-\infty}^\infty\hat{G}\left(\frac{mh}{\sqrt{\delta}} \right)e^{imhx/\sqrt{\delta}}\\
&=\frac{h}{2\sqrt{\pi}}\sum_{m=-\infty}^\infty e^{-m^2h^2/4}e^{imhx/\sqrt{\delta}}.
\ea
\label{poisson}
\ee
Truncating the right-hand side of \cref{poisson} and rearranging terms,
we obtain a Fourier spectral approximation of the Gaussian:
\be
G(x; \delta) \approx \frac{h}{2\sqrt{\pi}}\sum_{m=-M}^{M-1} e^{-m^2 h^2/4}e^{imhx/\sqrt{\delta}},
\label{pwappr}
\ee
with error
\be
\left|G(x;\delta) - \frac{h}{2\sqrt{\pi}}\sum_{m=-M}^{M-1} e^{-m^2 h^2/4}e^{imhx/\sqrt{\delta}}\right|
\le E_T + E_A,
\ee
where the truncation error $E_T$ is given by
\be\label{truncerror}
E_T=  \frac{h}{2\sqrt{\pi}}\left|\sum_{m=-\infty}^{-(M+1)} e^{-m^2 h^2/4}e^{imhx/\sqrt{\delta}} + \sum_{m=M}^{\infty} e^{-m^2 h^2/4}e^{imhx/\sqrt{\delta}} \right|,
\ee
and the ``aliasing'' error $E_A$ is given by
\be\label{aliaserror}
E_A = \sum_{\substack{n=-\infty \\ n\ne 0}}^{\infty}G\left(x+\frac{2\pi n \sqrt{\delta}}{h}; \delta \right).
\ee
For the truncation error, we have
\be\label{truncerror2}
\ba
E_T
&\le \frac{h}{\sqrt{\pi}}\sum_{m=M}^{\infty} e^{-m^2 h^2/4}
= \frac{h}{\sqrt{\pi}}\sum_{k=0}^{\infty} e^{-(M+k)^2 h^2/4}\\
&= \frac{h}{\sqrt{\pi}} e^{-M^2 h^2/4}\sum_{k=0}^{\infty} e^{-(2kM+k^2) h^2/4}\\
&\le \frac{h}{\sqrt{\pi}} e^{-M^2 h^2/4}\sum_{k=0}^{\infty} e^{-2kM h^2/4}
= \frac{h}{\sqrt{\pi}} \frac{e^{-M^2 h^2/4}}{1-e^{-2M h^2/4}}.
\ea
\ee
For the aliasing error, under the assumption that $|x| \leq R \sqrt{\delta} < 2\pi \sqrt{\delta}/h$, we have
\be\label{aliaserror2}
\ba
E_A
&\le 2\sum_{n=1}^{\infty}G\left(\frac{2\pi n \sqrt{\delta}}{h}-R\sqrt{\delta}; \delta\right)\\
&=2 e^{-\left(\frac{2\pi}{h}-R\right)^2}\sum_{k=0}^{\infty}
e^{-2\frac{2k\pi}{h}\left(\frac{2\pi}{h}-R\right)-\left(\frac{2k\pi}{h}\right)^2}\\
&\le\frac{ 2 e^{-\left(\frac{2\pi}{h}-R\right)^2}}
{1-e^{-\frac{4\pi}{h}\left(\frac{2\pi}{h}-R\right)}}.
\ea
\ee
With $D_0$ given by \eqref{D0}, we have
\be
e^{-D_0^2}\le \frac{\varepsilon}{3}.
\ee
By \eqref{pwterms}, we obtain
\be
e^{-\left(\frac{2\pi}{h}-R\right)^2} = e^{-D_0^2}\le \varepsilon/3, \quad
e^{-M^2 h^2/4} = e^{-D_0^2} \le \varepsilon/3.
\ee
From \eqref{truncerror2}, \eqref{aliaserror2}, the assumption that
$\varepsilon\le 0.1$ and
the fact that $R\ge D_0$, it is straightforward to verify that
\be
E_T\le \varepsilon/3, \quad E_A\le 2\varepsilon/3.
\ee
The result \eqref{pwerrorestimate} follows.
\end{proof}
The total number of quadrature nodes needed in the Fourier spectral approximation, 
denoted by $n_f(R)$, is
simply equal to 
  \be
  n_f=2M = 2\left\lceil\frac{D_0(R+D_0)}{\pi}\right\rceil \, .
  \ee
In~\cref{thm:planewave}, we could have used one fewer term and truncated the sum to 
$[-M+1,M-1]$  instead. However, for numerical efficiency considerations, we
use an even number of quadrature nodes in the Fourier spectral approximation.
We have relied here on the trapezoidal rule, but the
midpoint rule would also achieve 
spectral accuracy.

\Cref{table1} lists the number of terms $n_f(R)$ for various values of  the 
precision $\varepsilon$ and the range $R$. 
For the new version of the point FGT, we can control box dimensions so that
the relevant value of $R$ is always $2D_0$. For the continuous FGT, 
where an adaptive tree is provided as input, we make use of the 
user-provided data structure 
and, with dyadic refinement, can only enforce
that $R$ falls in the interval $[2D_0,4D_0]$.

\begin{table}[t]
  \caption{\sf Number of terms needed in the Fourier spectral approximation
    of the Gaussian for the interval $|x|\le R\sqrt{\delta}$ with $R=2D_0$, $3D_0$
  and $4D_0$ as a function of the precision $\varepsilon$.}
\centering
\begin{tabular}{c|c|c|c|c}
\toprule
$\varepsilon$ & $D_0$ & $n_f(2D_0)$ & $n_f(3D_0)$ & $n_f(4D_0)$\\
\midrule
$10^{-2}$  &  2.39 &    12  &   16 & 20 \\
$10^{-4}$  &  3.21 &    20  &   28 & 34 \\
$10^{-6}$  &  3.86 &    30  &   38 & 48 \\
$10^{-8}$  &  4.42 &    38  &   50 & 64 \\
$10^{-10}$ &  4.91 &    48  &   62 & 78 \\
$10^{-12}$ &  5.36 &    56  &   74 & 92 \\
\bottomrule
\end{tabular}
\label{table1}
\end{table}

\subsection{The periodic kernel}

In order to impose periodic boundary conditions on a 
box of length $1$ centered at the origin,
we can replace the free-space Gaussian with an infinite set of images to obtain
the periodic kernel in the form
\be
G_p(x;\delta)=\sum_{m=-\infty}^\infty e^{-\frac{(x+m)^2}{\delta}} \, .
\label{gp:image}
\ee
Note that $\delta$ can be suitably rescaled if periodic conditions are to be imposed on 
a box of length $L$.
Using Fourier analysis, on the other hand, we can also write the periodic
kernel as
\be
G_p(x;\delta)=\sqrt{\pi \delta}
\sum_{n=-\infty}^\infty e^{-(\pi\sqrt{\delta}n)^2}e^{i2\pi nx}.
\label{gp:fourier}
\ee
The equivalence of the two formulas \eqref{gp:image} and \eqref{gp:fourier} can be proven
using the Poisson summation formula.
It is straightforward to verify that
the infinite series is well-approximated by a finite sum
to precision $\varepsilon$,
\be
G_p(x;\delta) = \sqrt{\pi \delta}
\sum_{n=-n_p}^{n_p} e^{-(\pi\sqrt{\delta}n)^2}e^{i2\pi nx} +O(\varepsilon),
\ee
with
\be
n_p=\left\lceil\frac{\sqrt{\log(1/\varepsilon)}}{\pi\sqrt{\delta}}\right\rceil.
\ee

\cref{table2} lists the values of $n_p$ for various values of $\varepsilon$
and $\delta$.
As $\delta \rightarrow  0$, the full Fourier series for the periodic
Gaussian on the unit interval requires more and more terms. 
However, once $\delta < 1/36$, the contribution of images beyond the 
first neighbors is approximately $10^{-16}$, so that only a
minor modification of the free-space FGT is needed for such cases 
(see \cref{sec:periodic}).

\begin{table}[t]
  \caption{\sf Number of terms needed in the Fourier series approximation
    of the periodic Gaussian on the unit interval.}
\centering
\begin{tabular}{c|c|c|c|c|c|c} 
\toprule
\backslashbox{$\delta$}{$\varepsilon$} & $10^{-2}$ & $10^{-4}$ & $10^{-6}$ & $10^{-8}$ & $10^{-10}$ &$10^{-12}$\\
\midrule
$10^{-1}$  &  3&4&4&5&5&6\\
$10^{-2}$  &  7&10&12&14&16&17\\ $10^{-3}$  & 22&31&38&44&49&53\\
\bottomrule
\end{tabular}
\label{table2}
\end{table}

\subsection{The non-uniform fast Fourier transform}
In order to construct plane-wave representations from a collection of Gaussian sources,
we will use the NUFFT
in $\mathbb{R}^d$.
For this,
let $n_{f}$ denote the number of desired Fourier modes in each
dimension $i = 1,\dots, d$, which we have assumed to be even.
Let
${\cal I}_{n_{f}} = \{ -n_{f}/2,\dots, n_{f}/2-1 \}$, then
${\cal I} = {\cal I}_{n_{f}} \times \dots \times {\cal I}_{n_{f}}$ contains
$N = (n_{f})^{d}$ entries. Given data points
$\{ \x_1,\dots,\x_M \}$, 
the type-1 NUFFT computes sums of the form
\be
F(\bk)=\sum_{j=1}^{M} f_j e^{\pm i \bk \cdot \x_j}, 
\ee
where $\bk \in {\cal I}$.
That is, it computes the discrete Fourier transform at equispaced modes from function values at
arbitrary data points.
It's adjoint, also called the type-2 NUFFT evaluates sums of the form
\be
f(\x_j)=\sum_{{\bf k} \in {\cal I}} F(\bk) e^{\pm i \bk \cdot \x_j}.
\ee
That is, it computes the value of a truncated Fourier series at
arbitrary target points.
The cost of both type-1 and type-2 NUFFTs is
$O(N\log N) + O(M)$.
We will make use of the FINUFFT library~\cite{finufftlib} in our implementation
and refer to the paper \cite{finufft} for further details and a historical review of the
algorithm itself.
The constant implicit in the notation $O(M)$ is roughly
$(D+1)^d$ where $D$ is the number of desired digits of accuracy and $d$ is the dimension
of the problem.

\subsection{The Fourier and Gauss transforms of Legendre and Chebyshev polynomials} \label{boxlemmas}

As we shall see below, the continuous FGT requires both the direct integration of the Gaussian kernel multiplied
by a smooth function and the Fourier transform of a smooth function when using the plane wave approximation from the preceding section. 
In both cases, the smooth function is approximated using orthogonal polynomials on $[-1,1]$.
In particular, we will make repeated use of the integrals
\be\label{gsintegrals}
J_\lambda[p_n](t) = \int_{-1}^1 e^{-(t-x)^2/\lambda^2} p_n(x) dx \, ,
\ee
and
\be\label{pwintegrals}
I[p_n](t) = \int_{-1}^1 e^{i t x} p_n(x) dx \, ,
\ee
where $p_n(x)$ is either the Legendre polynomial $P_n(x)$ or the Chebyshev polynomial $T_n(x)$
of degree $n$. 
While these could be computed numerically without a major loss of efficiency,
it is convenient to do so analytically (or semi-analytically).

A five-term recurrence 
for $J_\lambda[p_n]$ when $p_{n}$ are the Chebyshev polynomials was derived in~\cite{veerapaneni2008jcp}. 
For Legendre polynomials, the same approach leads to the following five-term recurrence,
\be\label{Jnrecurrence}
\ba
\frac{n+2}{2n+3}J_\lambda[P_{n+2}](t) &=
t\left(J_\lambda[P_{n+1}](t)-J_\lambda[P_{n-1}](t)\right)+\frac{(n-1)}{2n-1}J_\lambda[P_{n-2}](t)\\
&+(2n+1)\left(\frac{\lambda^2}{2}+\frac{1}{(2n+3)(2n-1)}\right)J_\lambda[P_n](t), \quad n\ge 4.
\ea
\ee
The first four terms in this sequence are given by
\be
\ba
J_\lambda[P_0] &= \frac{\sqrt{\pi}}{2} \left(\erf(b)-\erf(a)\right), \\
J_\lambda[P_1](t) &= -\frac{d_1}{2} + tJ_\lambda[P_0],\\
J_\lambda[P_2](t) &= \frac{3}{4}\left(-d_2-t d_1 + (\lambda^2-2/3+2 t^2)J_\lambda[P_0]\right), \\
J_\lambda[P_3](t) &= \frac{5}{8}\left(-d_1 + 2 t\left(4 J_\lambda[P_2](t)-J_\lambda[P_0]\right)/3
-(2/5-4\lambda^2)J_\lambda[P_1](t)\right),
\ea
\ee
where $\erf(x)=\frac{2}{\sqrt{\pi}}\int_0^x e^{-t^2}dt$ is the error function,
\be
a=\frac{-1-t}{\lambda}, \
b=\frac{1-t}{\lambda}, \ 
d_1=\lambda\left(e^{-b^2}-e^{-a^2}\right), \, \, \textrm{and} \, \,
d_2=\lambda\left(e^{-b^2}+e^{-a^2}\right).
\ee
Empirically, we observe that \eqref{Jnrecurrence} appears stable for $\lambda\le 1/8$, 
similar to the observation in 
\cite{veerapaneni2008jcp} for the case of Chebyshev polynomials.
For larger values of $\lambda$, the integrand is very smooth and easily computed.

As for $I[p_{n}]$, when $p_{n}$ are Legendre polynomials, we have~\cite[\S10.54.2]{nisthandbook}
\be\label{pwintegrals2}
I[P_n](t) = \int_{-1}^1 e^{i t x} P_n(x) dx = \frac{2}{(-i)^n}j_n(t),
\ee
where $j_n$ is the spherical Bessel function of order $n$. $I[T_n]$ can be evaluated
by expanding $T_n$ into a Legendre polynomial series and then using \eqref{pwintegrals2}.

Suppose now that we are 
given a box $B$ on which we have a continuous density
$\sigma(\y)$. For simplicity, we assume it is
approximated by the multivariate Legendre polynomial
\be
\label{papprox}
\sigma(\y)\approx 
\sum_{\|\alpha\|_\infty\le k-1} c_\alpha P_\alpha(\y), 
\quad \y = (y_1,\ldots,y_d) \in B.
\ee
Here, we are using standard multi-index notation with 
$\alpha=(\alpha_1,\ldots,\alpha_d)$,
$\|\alpha\|_\infty=\max_{i=1}^d |\alpha_i|$,
$c_\alpha=c_{\alpha_{1},\alpha_{2},\ldots \alpha_{d}}$, and 
$c_\alpha P_\alpha(\y)=c_{\alpha} \prod_{i=1}^d  
P_{\alpha_i}(y_i)$, where $P_n$ is 
the Legendre polynomial of degree $n$. 

\begin{lemma} \label{polycost}
The {\em cost} of computing $c_\alpha$ from $\sigma(\y)$ on a 
tensor product grid with $k^d$ points is $O(d \, k^{d+1})$. 
\end{lemma}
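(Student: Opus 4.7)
The plan is to exploit the tensor-product structure of both the grid and the Legendre basis to perform the transform as a sequence of one-dimensional transforms, one per spatial coordinate, rather than as a single dense $k^d \times k^d$ operation.

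Concretely, let $\{t_m\}_{m=1}^k$ denote the one-dimensional nodes (say Gauss--Legendre nodes) used to form the tensor-product grid $\{(t_{m_1}, \ldots, t_{m_d})\}$, and let $U \in \mathbb{R}^{k \times k}$ be the $1$D "values-to-coefficients" matrix, so that for any univariate polynomial of degree $\le k-1$ sampled at the nodes, left multiplication by $U$ produces the vector of its Legendre coefficients. Because $P_\alpha(\y) = \prod_{i=1}^d P_{\alpha_i}(y_i)$ and the grid is $\{t_{m_1}\}\times\cdots\times\{t_{m_d}\}$, the multivariate values-to-coefficients map is exactly the Kronecker product $U \otimes U \otimes \cdots \otimes U$ ($d$ factors). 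This is the key structural observation.

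Next I would apply this Kronecker product one factor at a time. Viewing the array of samples $\sigma(t_{m_1}, \ldots, t_{m_d})$ as a $d$-index tensor of size $k\times\cdots\times k$, I would first contract $U$ against the first index: for each fixed choice of the remaining $d-1$ indices (there are $k^{d-1}$ such choices) I perform one $k\times k$ matrix-vector product, at cost $O(k^2)$ each, for a total of $O(k^{d+1})$ work in this sweep. Then I repeat along the second index, the third, and so on, for $d$ sweeps in all; each sweep has the same cost $O(k^{d+1})$ because the intermediate array remains of size $k^d$ throughout. Summing over the $d$ sweeps gives the claimed $O(d\,k^{d+1})$ bound.

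There is really no hard step here; the argument is essentially the standard "sum-factorization" trick that underlies fast tensor-product spectral transforms. The only thing worth being careful about is the precondition that the univariate nodes and the degrees $\alpha_i \le k-1$ match so that $U$ is a well-defined $k\times k$ invertible map (which is guaranteed as soon as the nodes are distinct, \emph{e.g.}\ Gauss--Legendre or Chebyshev nodes). Given that, the cost estimate follows immediately from the Kronecker factorization, and no generality is lost by the specific choice of $U$.
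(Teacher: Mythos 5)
Your proof is correct and follows essentially the same route as the paper: a one-dimensional $k\times k$ values-to-coefficients map (the paper's ${\cal T}_{val\to pol}$, obtained there via orthogonality and Gauss--Legendre quadrature) applied sequentially along each of the $d$ tensor directions, with each sweep costing $O(k^{d+1})$. The Kronecker-product formulation and the sum-factorization accounting you give are just a more explicit version of the paper's remark that the transform can be computed ``sequentially in each dimension.''
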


\begin{proof}
In one dimension, this follows from orthogonality. 
Using the Legendre basis (scaled to the box dimension), we have
\begin{equation} 
\label{val2poleq}
c_n  =  \frac{2n+1}{L} 
\int_{-L/2}^{L/2} \sigma(y) P_n \left(\frac{2y}{L} \right) dy
\approx \frac{2n+1}{2} \sum_{j=1}^k w_j \, \sigma(L\eta_j/2) \, P_n(\eta_j) 
\end{equation}
where $L$ is the side length of a box, and
$(\eta_j,w_j), j = 1,\dots,k$ are the nodes and weights for 
Gauss-Legendre quadrature on $[-1,1]$. 
In higher dimensions, the result follows from
the observation that one can compute this transform sequentially
in each dimension, assuming $\sigma$ is tabulated on a tensor product grid.
\end{proof}

\begin{definition} \label{val2poldef}
Let ${\cal T}_{val\to pol} \in \mathbb{R}^{k\times k}$ 
denote the one dimensional matrix from the preceding lemma which maps values of a function 
to its corresponding Legendre coefficients with entries
\[ {\cal T}_{val\to pol}(j,n) =  
\frac{2n+1}{2} w_n  \, P_{j-1}(\eta_n) \, \quad j,n=1,2,\ldots k\, .
\]
\end{definition}

The following lemma 
will be used when we wish to directly compute the Gauss transform
on a single box at a collection of target points.

\begin{lemma} \label{directbox}
Let $B$ be a box of side length $L$ (assumed to be 
centered at the origin for simplicity of notation).
Let $T$ be a box in its
neighbor list (see \cref{treedefs} and \cref{fignearfield}).
Then, for a source density on $B$ of the form
\eqref{papprox}, the potential at $\x = (x_1,\dots,x_d)$ is given by
\begin{align}
u(\x) &= \int_B e^{\|\x-\y\|^2/\delta} \sigma(\y)d\y  \nonumber \\
&= 
\sum_{\|\alpha\|_\infty\le k-1}  
c_\alpha \int_B e^{\|\x-\y\|^2/\delta} P_\alpha(\y)  d\y  \nonumber \\
&= 
\left( \frac{L}{2} \right)^d
\sum_{\|\alpha\|_\infty\le k-1}  
c_\alpha J_{\lambda}[P_{\alpha_1}](2x_1/L) \cdots
J_{\lambda}[P_{\alpha_d}](2x_d/L) 
\label{gdireval}
\end{align}
where
$\lambda = 2\delta/L$ and
$J_{\lambda}[P_{n}](x)$ is given by  
\eqref{gsintegrals}.
\end{lemma}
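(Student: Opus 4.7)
The plan is to prove the identity by direct substitution, followed by separation of variables and a one-dimensional rescaling of each factor. Three observations drive the proof: (i) the Legendre expansion \eqref{papprox} is finite and can be pulled out of the integral by linearity; (ii) both the multivariate polynomial $P_\alpha(\y) = \prod_{i=1}^d P_{\alpha_i}(y_i)$ and the Gaussian $e^{-\|\x-\y\|^2/\delta} = \prod_{i=1}^d e^{-(x_i-y_i)^2/\delta}$ factor across coordinates, so the $d$-dimensional integral over $B = [-L/2,L/2]^d$ collapses to a product of one-dimensional integrals on $[-L/2,L/2]$; (iii) each of those one-dimensional integrals becomes, after a linear change of variables mapping $[-L/2,L/2]$ to $[-1,1]$, precisely one of the quantities $J_\lambda[P_{\alpha_i}]$ introduced in \eqref{gsintegrals}.

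First, I would insert \eqref{papprox} into the definition of $u(\x)$ and interchange the sum with the integral, giving
\[
u(\x) = \sum_{\|\alpha\|_\infty \le k-1} c_\alpha \int_B e^{-\|\x-\y\|^2/\delta} P_\alpha(\y)\, d\y.
\]
Next, I would use the product structure of the integrand to rewrite each summand as $\prod_{i=1}^d \int_{-L/2}^{L/2} e^{-(x_i-y_i)^2/\delta} P_{\alpha_i}(y_i)\, dy_i$.

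Then, in each one-dimensional factor I would substitute $y_i = (L/2)\tilde y_i$, producing a Jacobian $L/2$, new integration limits $\tilde y_i \in [-1,1]$, and an exponent $-(L/2)^2(2x_i/L - \tilde y_i)^2/\delta$. Choosing $\lambda$ so that $(L/2)^2/\delta = 1/\lambda^2$, i.e.\ $\lambda = 2\sqrt{\delta}/L$ (the scaling implicit in the lemma's definition of $\lambda$), the one-dimensional integral becomes exactly $J_\lambda[P_{\alpha_i}](2x_i/L)$. Multiplying the $d$ factors and collecting the Jacobians into $(L/2)^d$ gives \eqref{gdireval}.

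There is really no hard step: the proof is a bookkeeping exercise in linearity, separation of variables, and rescaling. The one place requiring care is the change of variables that converts the Gaussian width $\delta$ on the box of side $L$ into the dimensionless width $\lambda$ on $[-1,1]$; keeping track of the factor $(L/2)^d$ coming from the $d$ Jacobians, and matching the rescaled argument $2x_i/L$ with the definition of $J_\lambda$ in \eqref{gsintegrals}, is where an error is most likely to creep in. The hypothesis that $T$ lies in the neighbor list of $B$ is not used in the identity itself; it becomes relevant only to ensure that $\x \in T$ is close enough to $B$ that the evaluation of $J_\lambda[P_{\alpha_i}]$ occurs on a bounded range of arguments where the recurrence \eqref{Jnrecurrence} is accurate and efficient.
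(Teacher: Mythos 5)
Your proof is correct and follows essentially the same route as the paper's one-line argument: substitute the finite Legendre expansion \eqref{papprox}, factor the Gaussian and the tensor-product polynomial across coordinates, and rescale each one-dimensional integral to $[-1,1]$ so that it matches \eqref{gsintegrals}. You are also right that matching exponents forces $\lambda = 2\sqrt{\delta}/L$ rather than the $2\delta/L$ written in the lemma (the latter is dimensionally inconsistent with the $1/\lambda^2$ in \eqref{gsintegrals} and appears to be a typo), so your careful bookkeeping at that step is exactly where the value lies.
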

\begin{proof}
The formula follows immediately from \eqref{gsintegrals}, 
\eqref{papprox}, and the change of variables $y_{i} \to 2y_i/L$
to rescale the box $B$ to one of dimension $[-1,1]^d$.
\end{proof}

\begin{remark} \label{dircost}
Assuming the output points $\{ \x_i, i=1,\dots,k^d \}$ 
lie on a tensor-product
grid in $d$ dimensions, separation of variables can be used to 
accelerate the evaluation of \eqref{gdireval} at all targets.
It is easy to check that the cost of evaluation 
is $O(dk^{d+1})$ 
floating point operations.
\end{remark}

It is also straightforward to expand the Gauss transform over a box
with a polynomial density in the plane wave basis (assuming that the
interaction is within the cutoff range so that the plane wave
approximation is accurate).

\begin{lemma} \label{pwbox}
Let $B$ be a box of side length $L$ centered at the origin
and let $T$ be a target box where the potential induced by the
source density $\sigma$ is to be evaluated.
Then
\begin{align*}
u(\x) &= \int_B e^{\|\x-\y\|^2/\delta} \sigma(\y)d\y \\
&= 
\sum_{\|\alpha\|_\infty\le k-1}  
c_\alpha \int_B e^{\|\x-\y\|^2/\delta} P_\alpha(\y)  d\y \\
&= 
\sum_{m_1=-n_{f}/2}^{n_{f}/2-1} 
\dots \sum_{m_d=-n_{f}/2}^{n_{f}/2-1} 
e^{-i h \m \cdot \x/\sqrt{\delta}} v(\m)
\end{align*}
where $\m = (m_1,\dots,m_d)$
and
\begin{align*}
v(\m) &=
\left( \frac{L}{2} \right)^d e^{- \|\m\|^2 h^2/4}
 \sum_{\|\alpha\|_\infty\le k-1}  
c_\alpha I[P_{\alpha_1}]\left(\frac{Lm_1 h}{2\sqrt{\delta}} \right) \cdots
I[P_{\alpha_d}]\left( \frac{Lm_2 h}{2 \sqrt{\delta}} \right) 
\end{align*}
where
$I[P_{n}](x)$ is given by  
\eqref{pwintegrals} and $h$ is given in
\eqref{pwterms}.
\end{lemma}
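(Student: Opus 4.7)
The plan is to combine three ingredients already in hand: the product structure of the multidimensional Gaussian, the one-dimensional plane-wave approximation from \cref{thm:planewave}, and the polynomial expansion \eqref{papprox} of $\sigma$. Since $T$ is a (near) neighbor of $B$, the separation $|x_i - y_i|$ is bounded by a constant multiple of $\sqrt{\delta}$ in each coordinate, so the plane-wave approximation is valid coordinate by coordinate to the prescribed precision. Writing $G(\x-\y;\delta) = \prod_{i=1}^d G(x_i-y_i;\delta)$ and applying \cref{thm:planewave} to each factor gives a representation of the form
\[
G(\x-\y;\delta) \approx C_{h,d} \sum_{\m} e^{-\|\m\|^2 h^2/4}\, e^{ih\m\cdot(\x-\y)/\sqrt{\delta}},
\]
where $C_{h,d}=(h/(2\sqrt{\pi}))^d$ and the sum is over $\m \in \{-n_f/2,\dots,n_f/2-1\}^d$. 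The key point is that the exponential factorizes as $e^{ih\m\cdot\x/\sqrt{\delta}}\,e^{-ih\m\cdot\y/\sqrt{\delta}}$, decoupling $\x$ from $\y$.

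Next, I would substitute the Legendre expansion \eqref{papprox} for $\sigma$ and interchange the finite sums with the integral. The integral over $B$ then factors into a product of $d$ one-dimensional integrals, the $i$th of which is
\[
\int_{-L/2}^{L/2} e^{-i m_i h\, y_i/\sqrt{\delta}}\, P_{\alpha_i}(2y_i/L)\, dy_i.
\]
Rescaling $y_i = (L/2) u_i$ turns this into $(L/2)\, I[P_{\alpha_i}]\!\left(-L m_i h/(2\sqrt{\delta})\right)$, with the sign difference reconcilable using the parity identity $I[P_n](-t)=(-1)^n I[P_n](t)$ (which itself follows from \eqref{pwintegrals2} and $j_n(-t)=(-1)^n j_n(t)$). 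Collecting the $d$ factors produces the $(L/2)^d$ prefactor, pulling the Gaussian weight $e^{-\|\m\|^2 h^2/4}$ out front, and grouping the $\x$-dependent plane waves outside yields exactly the claimed representation in terms of $v(\m)$.

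The main obstacle, which is mostly bookkeeping rather than conceptual, is simply tracking constants and signs: in particular, the $C_{h,d}$ prefactor must be consistent with the chosen normalization of $v(\m)$, and the sign convention in the outer exponential $e^{-i h \m\cdot\x/\sqrt{\delta}}$ must be reconciled either by relabeling $\m \mapsto -\m$ in the plane-wave sum or by absorbing a $(-1)^{\alpha_i}$ into each $I[P_{\alpha_i}]$ via parity. The one substantive hypothesis that needs to be invoked explicitly is that the interaction is within the cutoff, so that the error controlled by \cref{thm:planewave} is below the target tolerance; otherwise the ``equality'' in the lemma must be read as an approximation whose error matches the plane-wave error estimate \eqref{pwerrorestimate}.
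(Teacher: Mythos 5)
Your proposal is correct and follows essentially the same route as the paper's own (very terse) proof: substitute the Legendre expansion \eqref{papprox}, apply the tensor-product form of \cref{thm:planewave}, and rescale $B$ to $[-1,1]^d$ so the one-dimensional integrals become $I[P_{\alpha_i}]$. Your attention to the bookkeeping is warranted --- the prefactor $(h/(2\sqrt{\pi}))^d$ you track does need to appear in $v(\m)$ (it is omitted in the lemma as stated), and your reading of the final ``equality'' as an approximation valid only within the cutoff range, with error controlled by \eqref{pwerrorestimate}, matches the paper's own caveat preceding the lemma.
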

\begin{proof}
The formula follows from 
\eqref{papprox}, \eqref{pwintegrals}, 
\cref{thm:planewave},
and a change of variables mapping $B$ to the box $[-1,1]^d$.
\end{proof}

\begin{remark} \label{pwcost}
\cref{pwbox} will be used to accelerate the calculation of
Gaussian interactions at fine levels in the FGT hierarchy.
It is easy to verify that, using separation of variables, 
the cost is $O(n_fk)$ in one dimension,
$O(n_fk^2 + n_f^2k)$ in two dimensions, and
$O(n_fk^3 + n_f^2k^2 + n_f^3k)$ in three dimensions.
For the values of $n_f$ and $k$ that are of practical interest,
this is more efficient than using the NUFFT.
\end{remark}

Having formed a plane wave expansion, it can be evaluated
efficiently on a tensor product grid (within the relevant range).

\begin{lemma} \label{thm:pweval}
Let $B$ be a box of side length $L$ centered at the origin.
Then
\[
u(\x) = 
\sum_{m_1=-n_{f}/2}^{n_{f}/2-1} 
\dots \sum_{m_d=-n_{f}/2}^{n_{f}/2-1} 
e^{-i h \m \cdot \x/\sqrt{\delta}} v(\m)
\]
can be evaluated on a tensor product grid using
$O(n_fk)$ operations in one dimension,
$O(n_fk^2 + n_f^2k)$ operations in two dimensions, and
$O(n_fk^3 + n_f^2k^2 + n_f^3k)$ operations in three dimensions.
\end{lemma}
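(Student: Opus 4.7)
The plan is to exploit the tensor-product structure of the exponential kernel and the target grid to rewrite the multi-dimensional sum as an iterated sequence of one-dimensional sums, applied one coordinate at a time. Since
\[
e^{-i h \m \cdot \x/\sqrt{\delta}} = \prod_{j=1}^{d} e^{-i h m_j x_j/\sqrt{\delta}},
\]
and since on a tensor-product grid each target has the form $\x = (\eta_{i_1},\ldots,\eta_{i_d})$ with $i_j \in \{1,\ldots,k\}$, the quantity $u(\x)$ factors into nested sums whose evaluation can be reorganized into $d$ successive matrix-matrix products.

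First I would treat the one-dimensional case: the sum $u(\eta_i) = \sum_m e^{-i h m \eta_i/\sqrt{\delta}} v(m)$ is a single $k \times n_f$ matrix-vector product, costing $O(n_f k)$ operations. In two dimensions, I would introduce the intermediate array
\[
w(m_1, i_2) = \sum_{m_2=-n_f/2}^{n_f/2-1} e^{-i h m_2 \eta_{i_2}/\sqrt{\delta}} v(m_1,m_2),
\]
which for each fixed $m_1$ is a one-dimensional transform of cost $O(n_f k)$, giving $O(n_f^2 k)$ in total for this stage. The second stage,
\[
u(\eta_{i_1},\eta_{i_2}) = \sum_{m_1=-n_f/2}^{n_f/2-1} e^{-i h m_1 \eta_{i_1}/\sqrt{\delta}} w(m_1,i_2),
\]
then costs $O(n_f k^2)$, yielding the claimed total of $O(n_f k^2 + n_f^2 k)$. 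In three dimensions, the same idea with two intermediate arrays $w_1(m_1,m_2,i_3)$ and $w_2(m_1,i_2,i_3)$ produces the three contributions $O(n_f^3 k)$, $O(n_f^2 k^2)$, and $O(n_f k^3)$ from the three successive one-dimensional sweeps.

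This is the standard sum-factorization argument, so there is no real analytical obstacle; the only care needed is the bookkeeping to confirm that each of the $d$ sweeps applies a one-dimensional transform of size $n_f \to k$ to the remaining fibers of the current tensor, and that summing the stage costs reproduces exactly the three terms stated in the lemma. I would conclude by remarking that the same factorization is precisely what underlies the cost estimate already recorded in \cref{pwcost}, so that \cref{pwbox} and \cref{thm:pweval} together allow both formation and evaluation of plane-wave expansions on tensor-product grids at the quoted complexity.
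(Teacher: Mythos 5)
Your proposal is correct and is exactly the argument the paper intends: the lemma is stated without an explicit proof precisely because the dimension-by-dimension sum-factorization (converting one index at a time from the $n_f$ Fourier modes to the $k$ grid points) is the understood "separation of variables" justification, and your stage-by-stage cost accounting reproduces each term of the stated bounds. Nothing is missing.
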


\section{The new discrete FGT}  \label{sec:pfgt}

We turn now to the fast Gauss transform for point sources 
$\{ \y_j,j=1,\dots,N \}$ and targets 
$\{ \x_i,i=1,\dots,M \}$ in free space.
For a prescribed
precision $\varepsilon$, we first determine the 
``cutoff'' length $D_0 \sqrt{\delta}$ via \cref{D0}.
We then construct an adaptive level-restricted tree so that 
$D_0 \sqrt{\delta}$ 
is the exact dimension of a leaf node at some level of refinement.
For this, we first determine the median of the point distribution 
$\x_0 = (x_{0,1},\dots,x_{0,d}) \in\mathbb{R}^d$, with 
\be
x_{0,k} =  X_k^{max} - X_k^{min}
\ee
where 
\begin{align*}
X_k^{max} &= \max_{k=1}^{d}( \max_{i=1}^{M}  x_{i,k}, \max_{j=1}^N  y_{j,k} ), \\
X_k^{min} &= \min_{k=1}^{d}( \min_{i=1}^M  x_{i,k}, \min_{j=1}^N  y_{j,k} ).
\end{align*}
We set $\x_0$ to be the center of the ``root" node $B_0$.
The side length $L_0$ of the (square) root node is defined to be the 
smallest value
\be
L_0  =  2^{l_c} D_0 \sqrt{\delta}
\ee
which contains all the sources and targets, where
$l_c \in \mathbb{Z}^+$. We will refer to $l_c$ as the ``cutoff'' level.
We then build a {\it level-restricted} adaptive tree that sorts the 
sources and targets and refines boxes until each leaf node either has fewer than
$n_s$ sources and targets
or the tree has reached the cutoff level. Here $n_s$ is a user-supplied
constant. We use the standard language of tree data structures, with 
level $0$ defined to be the root node itself, and
level $l+1$ obtained recursively by subdividing each box at level
$l$ into $2^d$ equal parts. If $B$ is a box at level $l$,
the $2^d$ boxes at level $l+1$ obtained by its subdivision are 
referred to as its children.   

\begin{definition} \label{treedefs}
A tree is called {\em level-restricted} or {\em balanced} if two 
leaf nodes that share a boundary point are at most one level apart in the
hierarchy (\cref{pfgt_tree}).
In a level restricted tree, for any leaf node $B$ at level $l$, 
the leaf nodes sharing a
boundary point at the same level are called {\em colleagues} (including
$B$ itself). The leaf nodes at level $l+1$ sharing a
boundary point with $B$ are called {\em fine neighbors} and
the leaf nodes at level $l-1$ sharing a
boundary point with $B$ are called {\em coarse neighbors}. The union
of these three sets of boxes is called the {\em neighbor list}.
\end{definition}

The tree construction is carried out by the following method,
where by {\em points}, we mean the union of sources and targets.

\begin{algorithm}
{\em On input, we are given a collection of $N$ sources $\y_{j}$, $j=1,2,\ldots N$, 
a collection of $M$ targets $\x_{i}$, $i=1,2,\ldots M$, and $n_{s}$ the maximum number of points
in a box. The output is a level-restricted adaptive tree which has sorted the sources
and targets.}
  \caption{ Construction of level-restricted adaptive tree \label{alg1} }
  \begin{algorithmic}
   \State \multiline{$\sbtr$ Determine the center $\x_0$ of the root node $B_0$,
the box size $L_0$ and the cutoff length $D_0 \sqrt{\delta}$, as above.} 
\For{$l=0,1,\ldots l_{c}-1$}
\For{each nonempty box $B_{i}$ at level $l$}
\If{Number of sources or targets in $B_{i} > n_{s}$}
\State  $\sbtr$ Subdivide $B_{i}$ into $2^d$ child boxes at level $l+1$
\State $\sbtr$ Assign each point in $B_{i}$ to the child box in which is contained
\End
\End
\End
\State  \multiline{$\sbtr$ If the resulting adaptive tree is not level-restricted,
add additional refinement to enforce the condition using standard
algorithms (see, for example, \cite{sundar2008sisc}).
For the discrete FGT, also ensure that a leaf node at the cutoff
level containing more than $n_s$ points has no coarse neighbors.}
 \end{algorithmic}
\end{algorithm}

\begin{figure}[t]
\centering
\includegraphics[height=40mm]{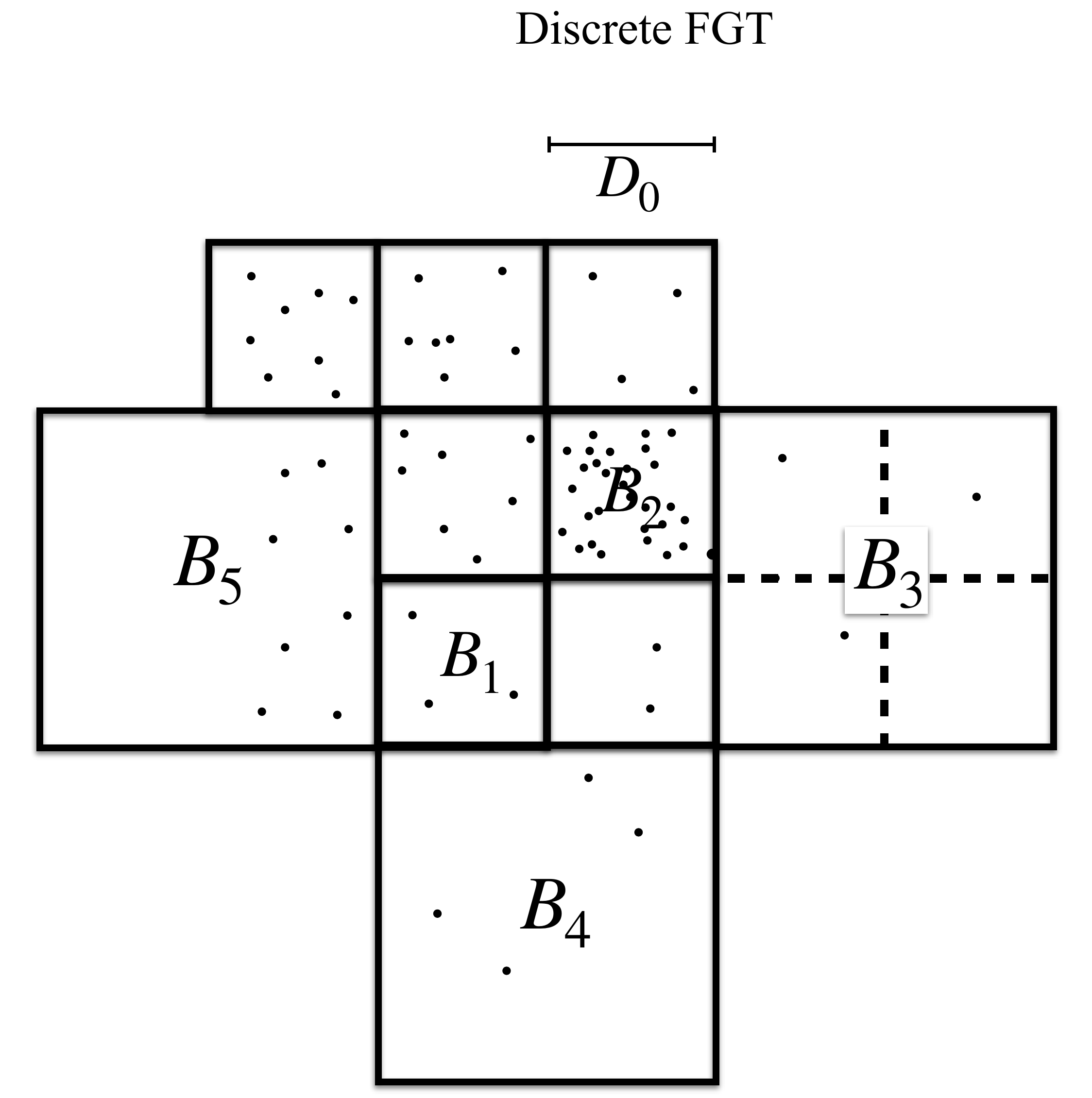}
\caption{\sf An adaptive tree for the discrete FGT.
The tree is used to resolve the point distribution down to the 
cutoff level, where the box dimension is $D_0 \sqrt{\delta}$. 
In this illustration,
$B_2$, a box at the cutoff level contains
more than $n_s$ points with a coarse neighbor $B_3$. 
This forces refinement, even though
$B_3$ had fewer than $n_s$ sources, in order to satisfy the 
criterion that the densely populated leaf node $B_2$ has no coarse
neighbors. Note that boxes $B_4,B_5$ do not require refinement. 
This will permit us to compute all non-trivial interactions for 
the points in $B_2$ using plane wave representations. 
}
\label{pfgt_tree}
\end{figure}

\begin{definition}
The {\em P-list} of a box $B$, denoted by $L_P(B)$, 
consists of all colleagues  
containing more than $n_s$ points. Note that the {\em P-list} is non-empty for
boxes $B$ at the cut-off level only.
The {\em D-List} of a leaf node $B$ at any level, denoted by $L_D(B)$, 
consists of all members of the neighbor list that are not in $L_P(B)$.
\end{definition}

In the new FGT, all processing is carried out at the cutoff level
(or above).
For a target point $\x$ in a box $B$, the corresponding potential $u(\x)$ can be
approximated by the sources in 
$L_D(B)$ and $L_P(B)$ with an error of $O(\varepsilon)$,
because of the exponential decay of the Gaussian, i.e., 
\be
u(\x)\approx u_D+u_P=\sum_{\y_j\in L_D(B)} G(\x-\y_j;\delta)q_j
  +\sum_{\y_j\in L_P(B)} G(\x-\y_j;\delta)q_j,
\label{pgt2}
\ee
For sources in $L_D(B)$, we compute the sum directly. 
It is straightforward to verify that, in a level-restricted tree, there 
are at most $C_d = 4^d - 2^d$ neighbors for any box 
(the maximum corresponding to 
a box at level $l$ surrounded by fine neighbors at level $l+1$).
Since each box in $L_D(B)$ contains at most $n_s$ sources, 
the cost of evaluating
the first sum in \eqref{pgt2} is bounded by a constant of the 
order $O(C_d n_s)$. 

For the second term $u_P$ in \eqref{pgt2},
we will approximate the Gaussian field using a 
plane-wave approximation, following \cref{thm:planewave}.
In $d$ dimensions, we use the tensor product version of the formula,
given by
\be\label{gpwapprox}
G(\x-\y;\delta) \approx \sum_{l=1}^{N_F}w_l e^{i \bk_l\cdot (\x-\y)}.
\ee
Here, $N_F= n_f(2D_0)^d$ with $n_f(2D_0)$ given in the 
third column of \cref{table1}  and 
$\bk_l, \, l = 1,\dots,N_F$ are the uniformly spaced points on a 
tensor product grid in Fourier space.
Recall, from the discussion in \cref{sec:spectral},
that the formula \eqref{gpwapprox} is accurate so long as 
$|x_i-y_i|\le 2 D_0 \sqrt{\delta}$ for
$i=1,\ldots,d$.
Letting $\bc_T$ be the center of box $T$ containing target $\x$,
and $\bc_S$ the center of box $S$ in $L_P(B)$, we define
the {\em outgoing} expansions $\phi_l(S), l = 1,\dots,N_F$
and the {\em incoming} expansions $\psi_l(B), l = 1,\dots,N_F$
by 
\be\label{multipoleexpansion}
\phi_l(S)=\sum_{\y_j\in S}e^{-i\bk_l\cdot (\y_j-\bc_S)} \, w_l \, q_j, 
\quad l=1,\ldots,N_F,
\ee
and
\be\label{diagonaltranslation}
\psi_l(T)=\sum_{S\in L_P(T)}e^{i\bk_l\cdot (\bc_T-\bc_S)}\phi_l(S).
\quad l=1,\ldots,N_F,
\ee
respectively.
It is straightforward to see that 
the contribution to $u_P$ at the target point $\x \in T$
is
\be\label{localexpansion}
u_{P}(\x)=\sum_{l=1}^{N_F}e^{i\bk_l\cdot (\x-\bc_T)}\psi_l(T). 
\ee

%
%
Thus, the NUFFT-based discrete FGT is described by the following five steps.


\begin{algorithm}
{\em On input, the set of $N$ sources and $M$ targets 
are assumed to be sorted on a level restricted adaptive tree, as described in
\cref{alg1}, for refinement parameter $n_s$. The output is the potential
evaluated at all target locations. 
}
  \caption{\label{alg2} NUFFT-based point FGT}
  \begin{algorithmic}
    \For{each box $B_{i}$ at level $l_{c}$} \Comment{Form outgoing expansion}
    \If{Number of sources in $B_{i} > n_{s}$}
    \State \multiline{$\sbtr$ Form the outgoing expansion $\phi_{\ell}(B_{i})$ from sources in $B_{i}$ 
    according to
\eqref{multipoleexpansion} using the type-1 NUFFT.}
 \End
\State $\sbtr$ Initialize the incoming expansion $\psi_l(B_{i})$ to zero.
\End

\For{each nonempty box $B_{i}$ at level $l_{c}$} \Comment{Gather expansions}
\State \multiline{$\sbtr$ Translate outgoing expansion for each box $S \in L_{P}(B_{i})$ to the center of $B_{i}$ 
and add to the incoming expansion $\psi_{l}(B_{i})$ according to~\eqref{diagonaltranslation}. (Each such translation is in diagonal form requiring only $N_F$ complex multiplications.)}
\End
\For{each nonempty leaf box $B_{i}$ at level $l_{c}$} \Comment{Evaluate incoming expansions}
\State $\sbtr$ Evaluate $\psi_l(B_{i})$ at
all targets in $B_{i}$ according to
\eqref{localexpansion}, using the type-2 NUFFT.
\End
\For{$l = 0, \ldots l_{c}$} \Comment{Direct interaction}
\For{each leaf box $B_{i}$ at level l}
\State \multiline{$\sbtr$ Compute the direct influence of the sources in $B_{i}$ at all targets in every box $B_{j}$ in $L_{D}(B_{i})$.}
\End
\End
  \end{algorithmic}
\end{algorithm}

Let $N_{box}$ denote the number of boxes $B_{i}$ which have greater than $n_{s}$ sources. 
The cost of forming outgoing expansions is 
$O(N_{box} N_F\log(N_F) + \log^d(1/\varepsilon) N)$,
the cost of gathering expansions is $O(3^d N_{box} N_F)$, and
the cost of evaluating expansions is 
$O(N_{box} N_F\log(N_F) + \log^d(1/\varepsilon) M)$.
Finally, the cost of the direct interactions is bounded by 
$O((4^d-2^d) n_s M)$.
Since $N_F$ is $O(\log^d(1/\varepsilon))$ by
~\cref{thm:planewave}, the total cost of the point FGT is 
\be\label{pfgtcost}
O\left(\log^d(1/\varepsilon)(M+N)\right).
\ee

For readers familiar with previous planewave-based FGT algorithms, it is 
worth noting that the expansion length $N_F$ is the same. Moreover,
the total number of translations per box is not much greater than
in the sweeping method of 
\cite{greengard1998nfgt,sampath2010pfgt,spivak2010sisc},
but permits a simple implementation on an adaptive tree.

\begin{remark}
It is also worth considering two extremes.
When $\delta \rightarrow 0$, the range of influence
of a Gaussian is vanishingly small. In that case, virtually
no expansion-related work is carried out, and the FGT basically
serves as an adaptive algorithm that assists in the determination of 
near neighbors for direct calculation.
When $\delta \rightarrow \infty$. on the other hand,
the Gaussian becomes smoother and smoother and the algorithm becomes trivial.
The tree construction terminates at the level of the root node, and
the FGT carries out one type-1 NUFFT and one type-2 NUFFT.
\end{remark}

\begin{remark}
For step 5 of the FGT (direct evaluation), a simple optimization
is to consider only the sources that lie in a ball
of radius $D_0\sqrt{\delta}$ centered at each target point rather than
all sources lying in the near neighbors.
In three dimensions, this reduces the cost by a factor of two or so.
\end{remark}

\section{The box FGT} \label{sec:vfgt}

We now consider the evaluation of the continuous or {\em box} Gauss 
transform~\eqref{bgt}, where the density $\sigma(\x)$ 
is assumed to be smooth and resolved on a level-restricted tree. 
We briefly describe how to construct such a tree given
a user-provided function that can evaluate $\sigma$ at any arbitrary point.
For this, let $k$ denote the desired polynomial order of approximation 
in each dimension and
let $\varepsilon$ be the desired precision.
Let $L_0$ denote the side length of the root box $B_0$ which we assume is
centered at the origin. 
Given an adaptive tree superimposed on $B_0$, 
we approximate the density
$\sigma(\x)$ on any box $B$ in this hierarchical data structure
as discussed in~\eqref{papprox}, i.e., 
\[
\sigma(\x)\approx 
\sum_{\|\alpha\|_\infty\le k-1} c_\alpha p_\alpha(\x), 
\quad \x = (x_1,\ldots,x_d) \in B.
\]
It is well-known that the 
approximation error in \eqref{papprox} 
can be estimated from the tails of the coefficients $c_\alpha$
outside the $l_2$ ball as
\be\label{funerr}
e=\sqrt{\frac{1}{N_{k,2}}\sum_{\|\alpha\|_{2} \ge k}|c_\alpha|^2},
\ee
where $N_{k,2}$ is the number of terms in the summation.
(See, for example, \cite{trefethen_approx,trefethen_mult}.) 
Using this error monitor, it is straightforward to  
recursively build an adaptive tree that resolves the density $\sigma$.

\begin{algorithm}
{\em Input is a tolerance $\varepsilon$, and the density function $\sigma$ which can be 
evaluated at an arbitrary point. We assume that a Legendre basis is used to represent the density on each box.
The output is a level-restricted
adaptive tree which resolves $\sigma$ to precision $\varepsilon$.}
  \caption{Resolving a continuous density on a 
level-restricted adaptive tree \label{alg3} }
  \begin{algorithmic}
    \For{level $l=0,1,\ldots$}  
    	\For{each box $B_i$ at level $l$}
    \State  $\sbtr$ Compute the density at the tensor product 
       Legendre nodes in $B_i$.
    \State  $\sbtr$ Calculate the expansion coefficient vector $c_{\alpha}$
       using \cref{val2poleq}, \cref{polycost}.
    \State $\sbtr$ Estimate the approximation error $e(B_i)$ via \eqref{funerr}.
     \If{$e(B_i)>\varepsilon \|\sigma\|_2$} 
    \State  $\sbtr$ Subdivide $B_i$ into $2^d$ child boxes at level $l+1$
    \End
    \End
    \End
    \State \multiline{$\sbtr$ If the resulting adaptive tree is not level-restricted,
add additional refinement to enforce the condition using standard
algorithms (see, for example, \cite{sundar2008sisc}).
The density on the refined boxes can be obtained either by interpolation
or by calling the user-provided function.
If we let $N_{\rm leaf}$ denote the number of leaf nodes, then there
are $N=N_{\rm leaf} \cdot k^d$ total points in the discretization.
It is easy to check that the cost of building the tree is approximately 
$O(dkN)$.}
\end{algorithmic}
\end{algorithm}

There are several important distinctions to highlight
between the continuous and discrete
FGTs: 
\begin{enumerate}
\item Unlike 
the discrete FGT, the tree construction does not terminate
at the cutoff level $l_c$. Deeper levels of refinement may be
needed to resolve the source density.
\item There are no empty boxes in the data structure.
In regions of space where the density is smooth, refinement will
terminate at a coarse level, but every leaf node is assumed to have 
a non-trivial density $\sigma(\x)$
tabulated on a tensor product grid with $k^d$ points.
\item
Most importantly, we can make very effective use of separation
of variables at every step of the algorithm, using the tools developed
in \cref{boxlemmas} and illustrated in \cref{fignearfield}, as we shall now
see.
\end{enumerate}

\begin{figure}[t]
\centering
\includegraphics[height=3in]{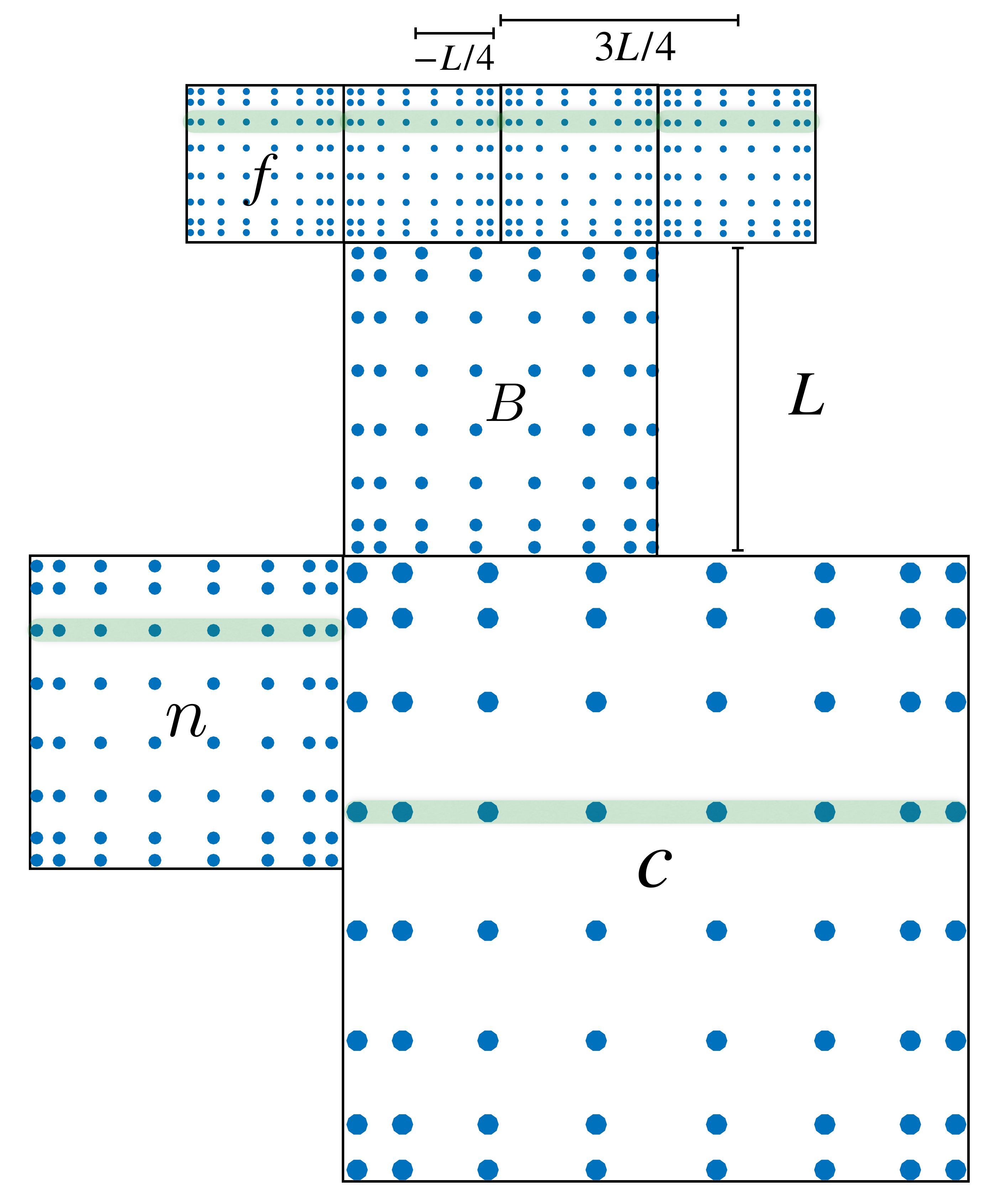} 
\caption{\sf 
For a leaf node $B$ at a coarse level,
its interactions with neighbors are computed directly.
There are only a finite number of possible locations for colleagues ($n$),
fine neighbors ($f$) and coarse neighbors ($c$). 
More precisely, it is straightforward to see that
in $d$ dimensions, there are at most $3^d$ colleagues,
$(2^d-1) \cdot 2^d$ fine neighbors and  
$(2^d-1) \cdot 2^d$ coarse neighbors. 
Because of the separability of the
Gaussian and the fact that the target grids are defined as tensor products,
the cost of direct calculation is $O(dk^{d+1})$ 
rather than $O(k^{2d})$, where $k^d$ is the number of grid
points in a box.
Note also that, in a level-restricted tree,
the tensor product grids on neighbors can only have a few possible
locations with respect to a leaf box $B$. 
Assuming $B$ is of side length $L$, 
the offset of the center of any fine neighbor in the $x_1$-direction 
has only four possible values:
$\{-3L/4,-L/4,L/4,3L/4 \}$.
}
\label{fignearfield}
\end{figure}

\subsection{Precomputation} \label{sec:precomp}

Before turning to the algorithm itself, we define several 
matrices that will be used throughout the box FGT and 
need to be computed once per level.
It is straightforward to see that, in a level-restricted tree,
the tensor product grids on neighbors can only have a few possible
locations with respect to a source leaf box $B$. 
As illustrated in  \cref{fignearfield},
assuming $B$ is of side length $L$, 
the offset of the center of any fine neighbor in the $x_1$
direction has only four possible values:
$\{-3L/4,-L/4,L/4,3L/4 \}$, which we denote by $s_1,s_2,s_3,s_4$,
respectively.

\begin{definition} \label{pol2potdef}
Letting
$\xi_1^{s_{l}},\dots,\xi_k^{s_{l}}$ be the $k$ Legendre nodes scaled to the 
fine neighbor length $L/2$, and offset by the relevant value $s_l,
l = 1\dots,4$,
we define by ${\cal T}^{s_l}_{pol\to pot} \in \mathbb{R}^{k\times k}$ 
the one dimensional matrix
${\cal T}^{s_l}_{pol\to pot} \in \mathbb{R}^{k\times k}$ 
with entries 
\[ {\cal T}^{s_l}_{pol\to pot}(i,j) = J_\lambda[P_{j-1}](2\xi_i^{s_{l}}/L), \]
where $\lambda = 2\delta/L$ for $i,j = 1,\dots,k$.
\end{definition}
Likewise, the center of a coarse neighbor also
has only four possible values: \\
$\{-3L/2,-L/2,L/2,3L/2 \}$, which we denote by $s_5,s_6,s_7,s_8$,
respectively.

\begin{definition} \label{pol2potdef2}
${\cal T}^{s_l}_{pol\to pot}$ for $l=5,\dots,8$ 
is defined as for fine neighbors but
with $\{ \xi_i \}$ located at scaled Legendre grid points
on a coarse neighbor
with offset $s_l$.
\end{definition}

The center of a colleague has only three possible values:
$\{-L,0,L \}$, which we denote by $s_9,s_{10},s_{11}$,
respectively.

\begin{definition} \label{pol2potdef3}
${\cal T}^{s_l}_{pol\to pot}$ for $s=9,10,11$ is defined as for other
neighbors but
with $\{ \xi_i^{s_{l}} \}$ located at a scaled Legendre grid on a colleague
with offset $s_l$. 
\end{definition}
The total cost at each level is negligible --- just
$11 k^2$ evaluations of the function $J_\lambda[P_{j-1}]$.

We will also make repeated use of the
one-dimensional transformation matrices $T_{pol\to pw}$ 
which map polynomial basis functions to their
plane-wave expansion coefficients via \eqref{pwintegrals}.

\begin{definition} \label{pol2pwdef}
We define ${\cal T}_{pol\to pw} \in \mathbb{C}^{n_{f}\times k}$ 
as the matrix with entries 
\[ {\cal T}_{pol\to pw}(i,j) = I[P_{j-1}](L i h/(2\sqrt{\delta})), \]
where $h$ is given in \eqref{pwterms}.
\end{definition}

Once the plane wave expansions are available for a box $B$,
we will make use of the precomputable matrices 
$T_{pw \to pot}$ for evaluation. 

\begin{definition} \label{pw2pot}
We define
${\cal T}_{pw\to pot} \in \mathbb{C}^{k\times n_{f}}$ 
as the matrix with entries 
\[ {\cal T}_{pw\to pot}(n,m) = e^{-i h m \xi_n/\sqrt{\delta}}, \]
where $h$ is given in \eqref{pwterms} and the
$\{ \xi_i \}$ are the Legendre nodes scaled to the box's side length $L$,
according to \eqref{thm:pweval}.
\end{definition}

Given a plane wave expansion for a box $B$ of the form
\[
u(\x) = 
\sum_{\m}
e^{-i h \m \cdot \x/\sqrt{\delta}} v(\m)
\]
we will also need to shift the expansion center to either
a parent, child or colleague.
We have already encountered the ``outgoing to incoming" shift 
in \eqref{diagonaltranslation}.
We define the corresponding map as
the diagonal matrix ${\cal T}_{out \to in} \in \mathbb{C}^{N_F \times N_F}$ 
with non-zero entries ${\cal T}_{out \to in}(l,l) = 
e^{i\bk_l\cdot (\bc_B-\bc_S)}$, where
$\bc_B$ is the center of $B$ and $\bc_S$
is the center of its colleague.
When shifting an outgoing expansion from child to parent, we will use
the diagonal matrix ${\cal T}_{c \to p} \in \mathbb{C}^{N_F \times N_F}$ 
with non-zero entries ${\cal T}_{c \to p}(l,l) = 
e^{-i\bk_l\cdot (\bc_S-\bc_B)}$, where 
$\bc_S$ is the center of one of $B$'s children.
For the reverse, shifting an incoming expansion from parent to child, 
we will use
the diagonal matrix ${\cal T}_{p \to c} \in \mathbb{C}^{N_F \times N_F}$ 
with non-zero entries ${\cal T}_{p \to c}(l,l) = 
e^{i\bk_l\cdot (\bc_B-\bc_S)}$, where 
$\bc_S$ is the center of $B$'s child.
Aspects of the box FGT are shown in \cref{bfgt_tree}.

\subsection{The full algorithm}

As noted above, the box FGT is a little more involved than the
discrete FGT, since the tree construction does not terminate
at the cutoff level $l_c$. 
Because of the separability 
of all steps in the algorithm, however, the performance in terms of
work per grid point is significantly better.
The algorithm combines 
hierarchical aspects of the FMM with the plane wave approach of
\cite{greengard1998nfgt,spivak2010sisc}. 

\begin{definition}
The P-List, $L_P(B)$, of a non-leaf box at level $l_c$ consists of all of 
its non-leaf neighbors at level $l_c$.
The D-List, $L_D(B)$, of a leaf box at level $l\le l_c$ consists of all of
its leaf neighbors at level $l\le l_c$.
\end{definition}

\begin{definition} \label{fflagdef}
For each box $B$, we also assign a flag $f(B)$ that indicates
whether it requires storage for a plane-wave expansion
We set $f(B)=1$ if it does: either
because it is at a level $l > l_c$ or because it is at level
$l_c$ and one of its colleagues is further subdivided. 
\end{definition}

\begin{algorithm}
{\em On input, the source density is assumed to
be provided on a level-restricted adaptive tree 
with $k$th order tensor product data on leaf nodes, 
as constructed in \cref{alg3}, with
cutoff level $l_c$ and maximum refinement level $l_{\rm max}$.
The output is the potentials evaluated at all tensor product grid points 
on all leaf boxes.}
  \caption{\label{alg4} The box FGT}
  \begin{algorithmic}
    \For{level $l=l_{\rm max}, l_c, -1$} \Comment{Form outgoing expansions}
    \State $\sbtr$ Compute the operators ${\cal T}_{pol\to pw}$ 
    and ${\cal T}_{val \to pw} = {\cal T}_{val \to pol} {\cal T}_{pol\to pw}$ 
    for level $l$ 
    \State (\cref{val2poldef,pol2pwdef}).
    \For{each leaf box $B_i$ with $f(B_i)=1$ at level $l$} 
    \State \multiline{$\sbtr$ Transform function values in $B_i$ to
    the plane-wave expansion using $T_{val\to pw}$.}
    \End
    \End
    \For{level $l=l_{\rm max}-1, l_c, -1$} \Comment{Merge outgoing expansions}
    \State $\sbtr$ Compute the operators ${\cal T}_{c \to p}$ for level $l+1$.
    \For{each non-leaf box $B_i$ at level $l$} 
    \State $\sbtr$ Translate and merge the plane-wave expansions of its children 
    using ${\cal T}_{c \to p}$.
    \End
    \End
    \State $\sbtr$ Compute the operators ${\cal T}_{out \to in}$ for level $l_c$.
    \For{each box $B_i$ at level $l_c$} \Comment{Gather expansions}
    \For{each box $S$ in $L_P(B_i)$}
    \State \multiline{$\sbtr$ Merge the outgoing expansion $\bphi(S)$ into the incoming
    expansion $\bpsi(B_i)$ using ${\cal T}_{out \to in}$.}
    \End
    \End
    \For{level $l=l_c,\ldots,l_{\rm max}-1$} \Comment{Push to children}
    \State $\sbtr$ Compute the operators ${\cal T}_{p \to c}$ for level $l$.
    \For{each non-leaf box $B_i$ with $f(B_i)=1$ at level $l$} 
    \State $\sbtr$ Translate the incoming expansion $\bpsi(B_i)$ to its 
    children using ${\cal T}_{p \to c}$. 
    \End
    \End
    \For{level $l=l_c,\ldots,l_{\rm max}$} \Comment{Evaluate expansion}
    \State $\sbtr$ Compute the operator ${\cal T}_{pw\to pot}$ for level $l$.
    \For{each leaf box $B_i$ at level $l$} 
    \State $\sbtr$ Evaluate the expansion $\bpsi(B_i)$ at the tensor product grid 
     using $T_{pw \to pot}$.
    \End
    \End
    \For{$l=0,\ldots,l_c$} \Comment{Direct interaction}
    \For{each source box $S$ at level $l$}
    \State \multiline{$\sbtr$ Compute the operators ${\cal T}_{pol\to pot}^{s_{j}}$, $j=1,2\ldots 11$, 
    and ${\cal T}_{val \to pot} = {\cal T}_{val \to pol} {\cal T}_{pol\to pot}$ 
    for level $l$.}
    \For{each target box $T$ in $L_D(S)$}
    \State \multiline{$\sbtr$ Compute the influence of $S$ at targets in $T$ using
     $T_{val \to pot}$ and add to 
   the contribution from the evaluation of local expansions.}
    \End
    \End
    \End
  \end{algorithmic}
\end{algorithm}

\begin{figure}[t]
\centering
\includegraphics[height=40mm]{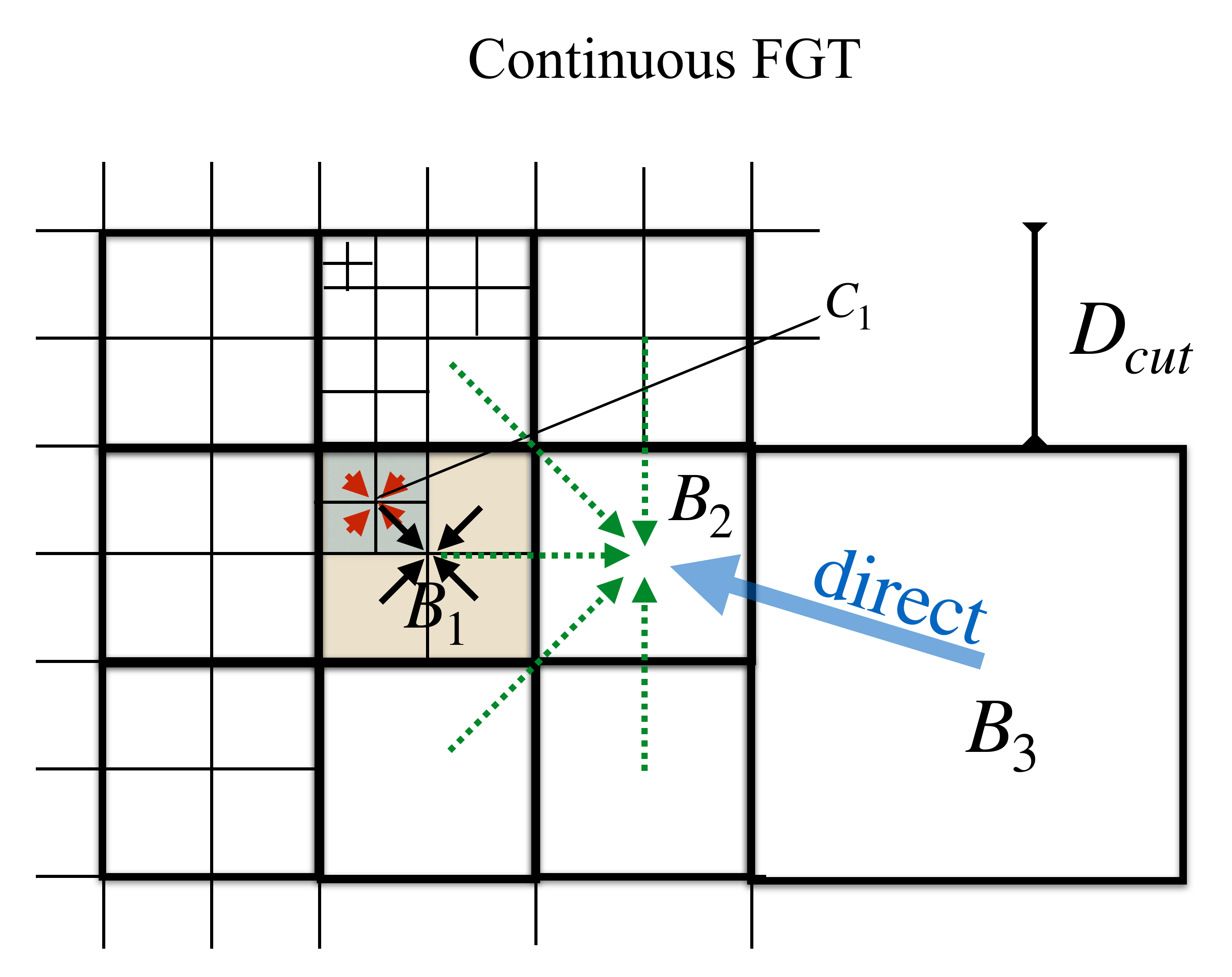}
\caption{\sf In the box FGT, outgoing plane wave expansions are 
recursively merged from fine levels up to the cutoff level (where
boxes are of side length $D_{cut}$), illustrated for box
$B_1$ and its refined child $C_1$. 
Outgoing expansions are first translated along the red arrows
to the center of $C_1$ from its children.
The black arrows indicate merger at the next level,
resulting in the outgoing expansion for $B_1$.
Once all outgoing expansions are available at the cutoff level,
they are translated to incoming expansions. This is illustrated
for box $B_2$. It acquires incoming expansions from its {\em colleagues}
(green dashed arrows) but the contribution from the source density in 
the coarse neighbor $B_3$
is obtained directly, using precomputed tables and separation of variables
(thick blue arrow).
Returning to a consideration of box $B_1$,
once it has acquired all relevant data as an
incoming expansion, that expansion is pushed recursively to its children
until leaf nodes are reached, at which point the plane wave expansion is
evaluated on the tensor product grid.
}
\label{bfgt_tree}
\end{figure}


A somewhat tedious calculation shows that the 
total computational cost is bounded by
\be
C \le c_1 N_{\rm leaf} n_f^d k + c_2 N_b n_f^d + c_3 3^d N_b n_f^d + c_4 3^d d k^{d+1} N_{\rm leaf},
\ee
where 
$N_{\rm leaf}$ is the number of leaf nodes,
$n_f$ is the number of plane wave modes in one dimension, and
$N_b$ is the total number of boxes in the adaptive tree.

Using the fact that $N_b \leq \frac{1}{1-2^{-d}} \, N_{\rm leaf}$, and that
the total number of
tensor product grid points is $N=N_{\rm leaf} k^d$,
we have
\be
C = O\left(\left[k(n_f/k)^d + 3^d(n_f/k)^d+3^d k\right] \, N\right).
\ee

\subsection{Resolving the output potential}  \label{sec4.2}

After completing the box FGT, we have accurately 
computed the potential at the given tensor product grid to the requested
precision.  In many applications, however,
one needs to evaluate the potential at auxiliary
target points. If the adaptive tree that resolves the input source density
were sufficient to resolve the output potential as well, 
then one could simply use polynomial interpolation for these points.
It is perhaps surprising that, {\em in an adaptive environment},
this is not always the case,
despite the fact that convolving with a Gaussian is a smoothing operator.
This phenomenon is best illustrated with a two-dimensional example.
In \cref{boxmove}, we show an adaptive tree resolving a source distribution
(top left) and the adaptive tree needed to resolve the convolution of that
distribution with a Gaussian. The difficulty here is that
a sharp feature can be diffused a short distance while remaining too sharp
to be well resolved by the original adaptive grid. 
In this example, the
input data consists of two sharply peaked Gaussians 
\[
\sigma_f(\x)= \sum_{i=1}^{2} e^{-\|\x-\x_i \|^2/\alpha_i}
\]
with $\alpha_1=\alpha_2=10^{-4}$. We let $\delta=4\times 10^{-3}$, and
set
the polynomial approximation order to $k=8$ and the requested 
precision to $\varepsilon=10^{-12}$.
We then evaluate the potential at four million equispaced tensor product points
on the unit box. A level-restricted adaptive tree is constructed
resolve the input data to the requested precision. The total number
of boxes is $5,709$ and among them $4,282$ are leaf boxes. If we use
the potential values at the tensor grid of the input tree to 
calculate the potentials
at four million equispaced points by interpolation. The relative $l_2$ error is
about $10^{-7}$. To remedy this loss of precision, we introduce refinement 
(and coarsening) capabilities below. After refinement,
the relative $l_2$ error is about $1.3\times 10^{-12}$. 
During the refinement stage,
$3,400$ boxes are added. During the coarsening stage, $3,824$ boxes are deleted.
Enforcing the level-restriction added only $16$ boxes and the total number of 
boxes in the new tree is $5,301$. 

\begin{figure}[t]
\centering
\includegraphics[height=40mm]{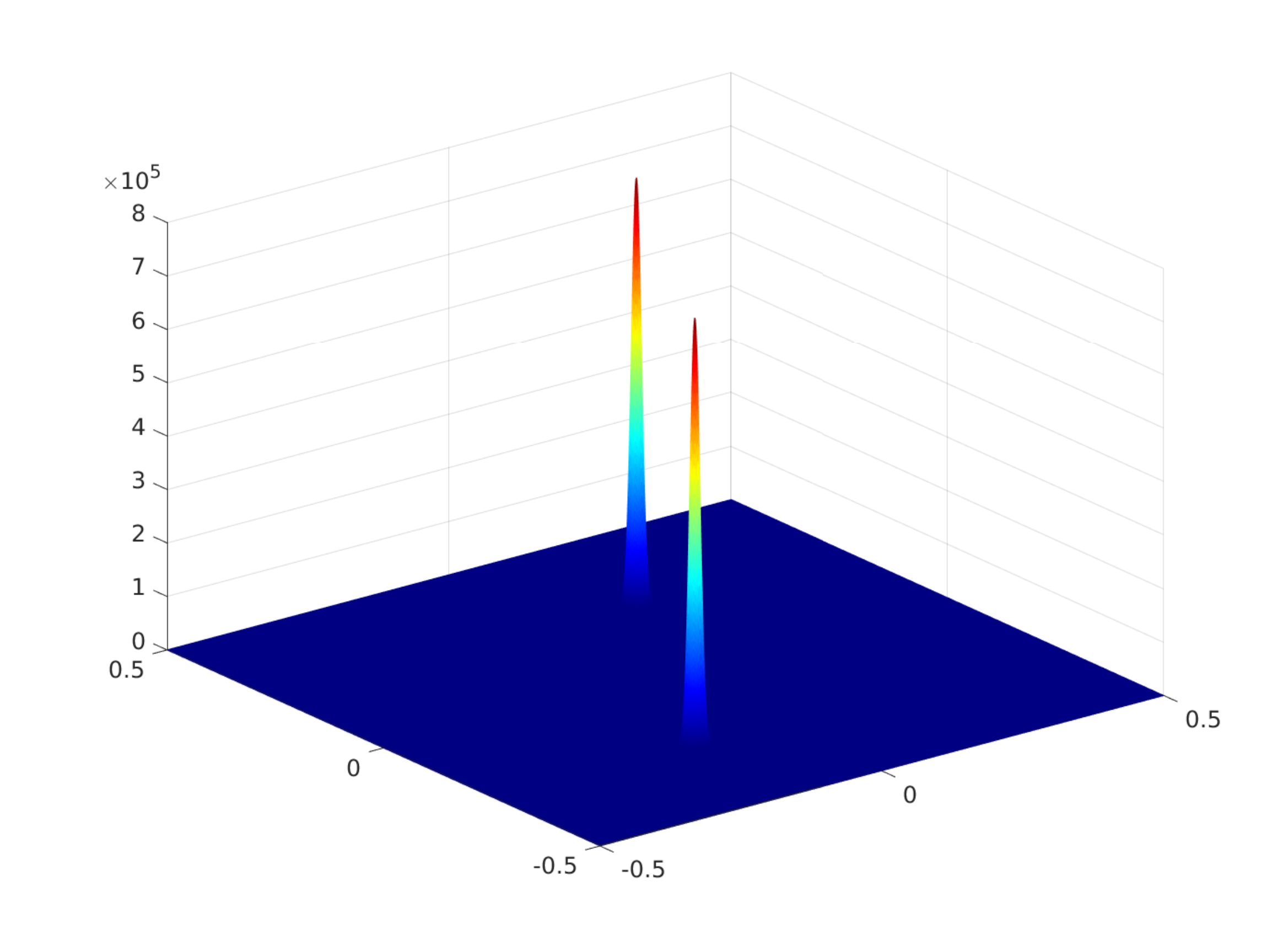}
\hspace{0.4in}
\includegraphics[height=40mm]{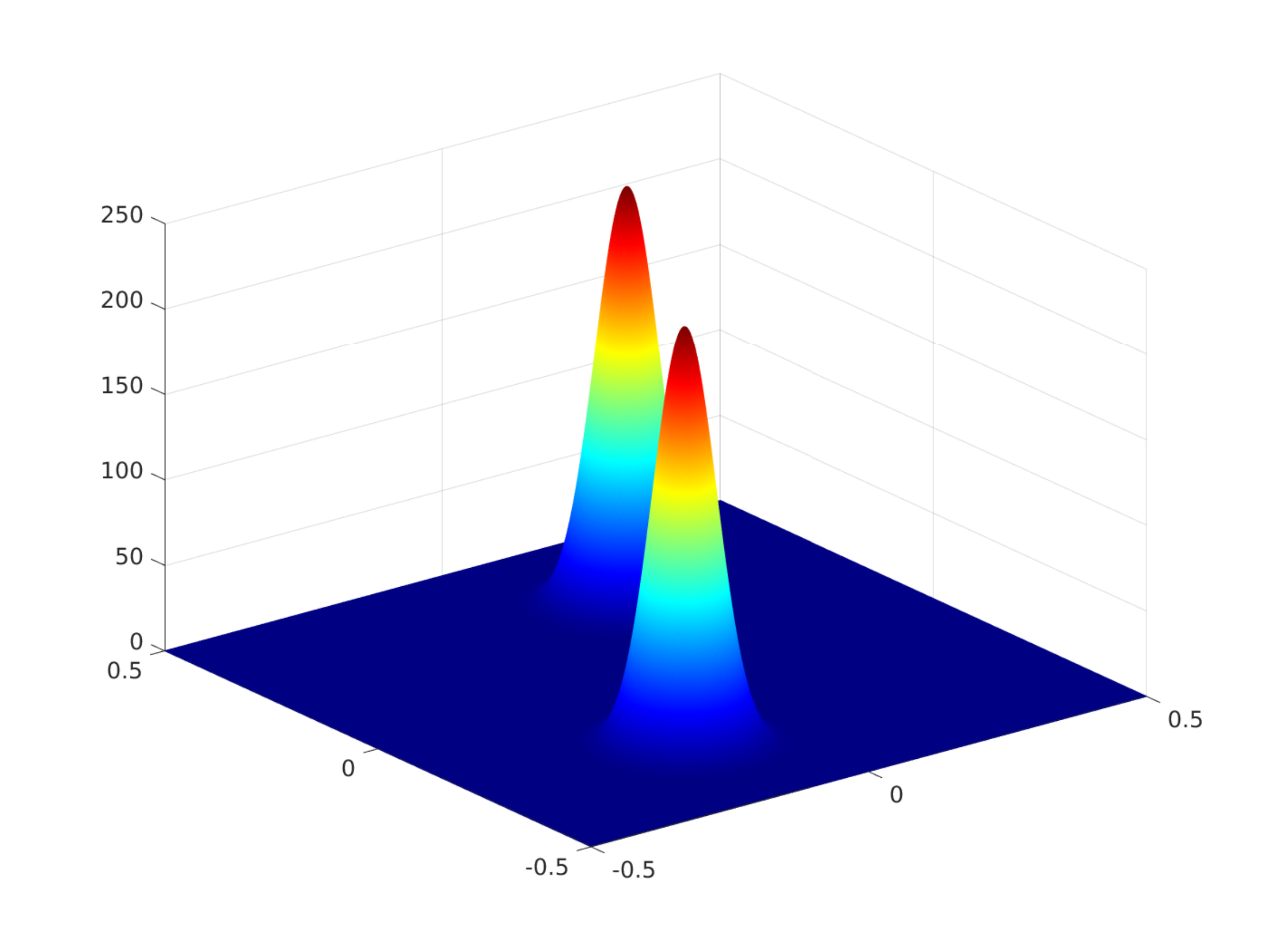}

\vspace{5mm}

\includegraphics[height=40mm]{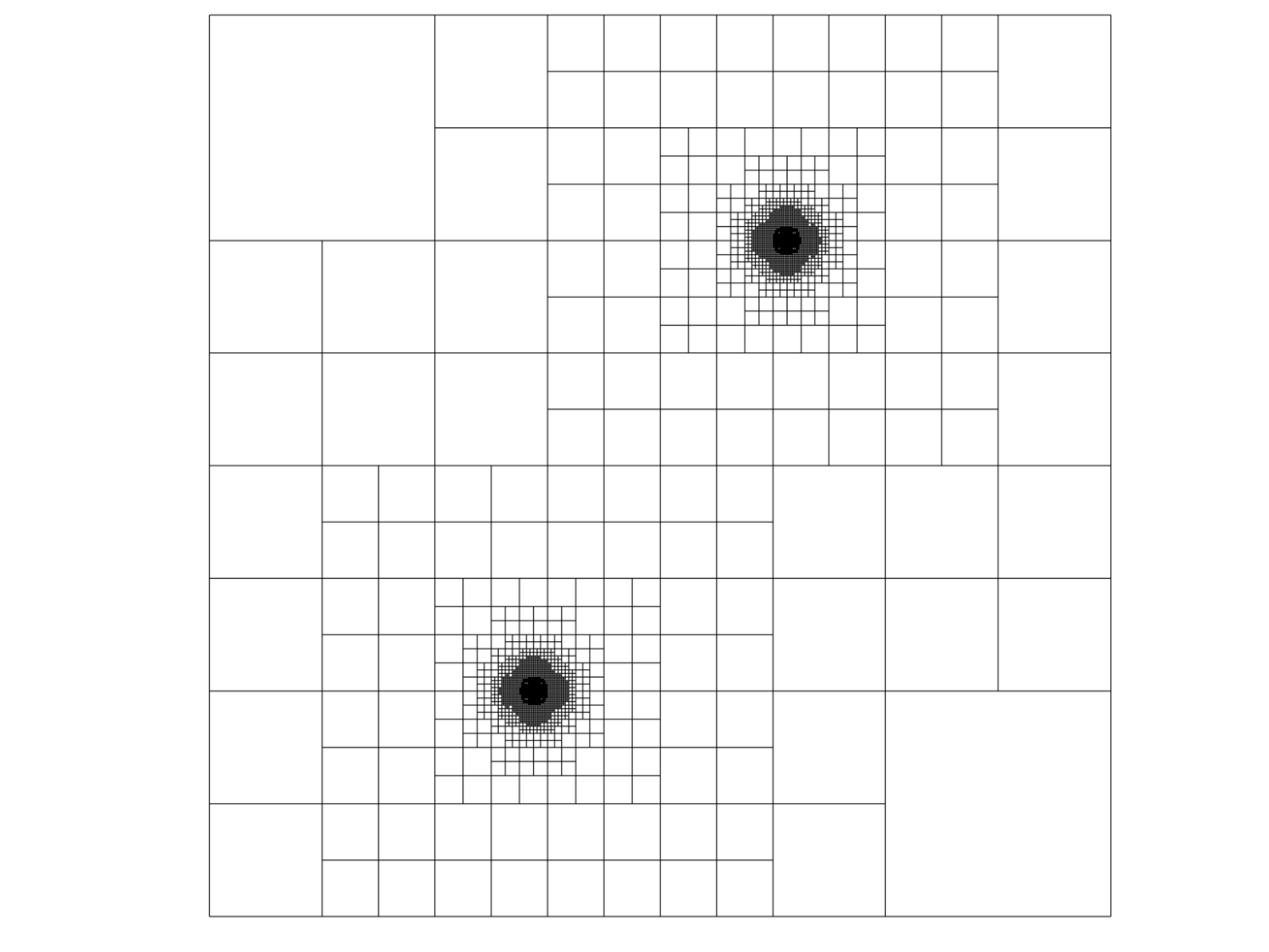}
\hspace{0.4in}
\includegraphics[height=40mm]{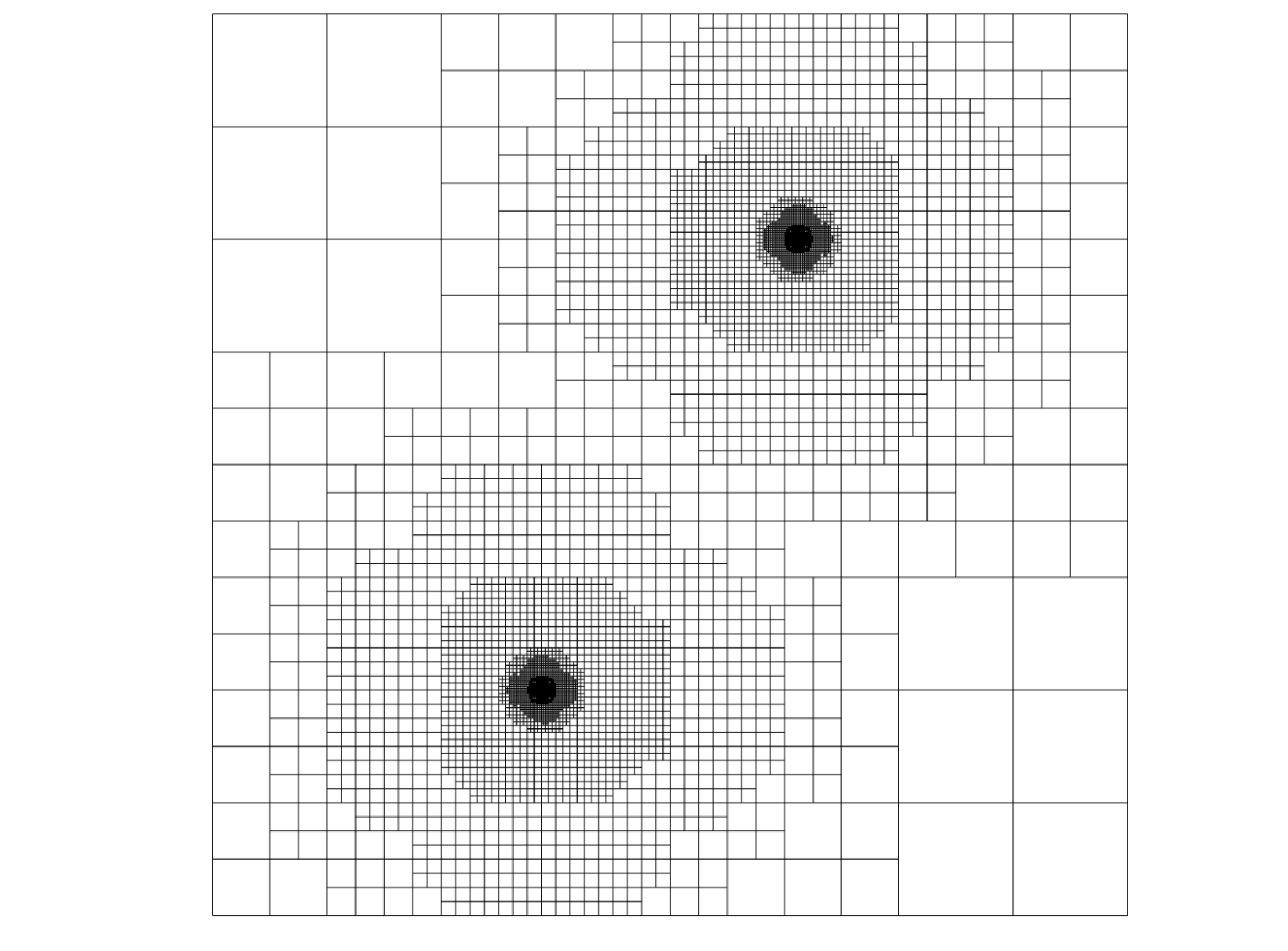}

\vspace{5mm}

\includegraphics[height=40mm]{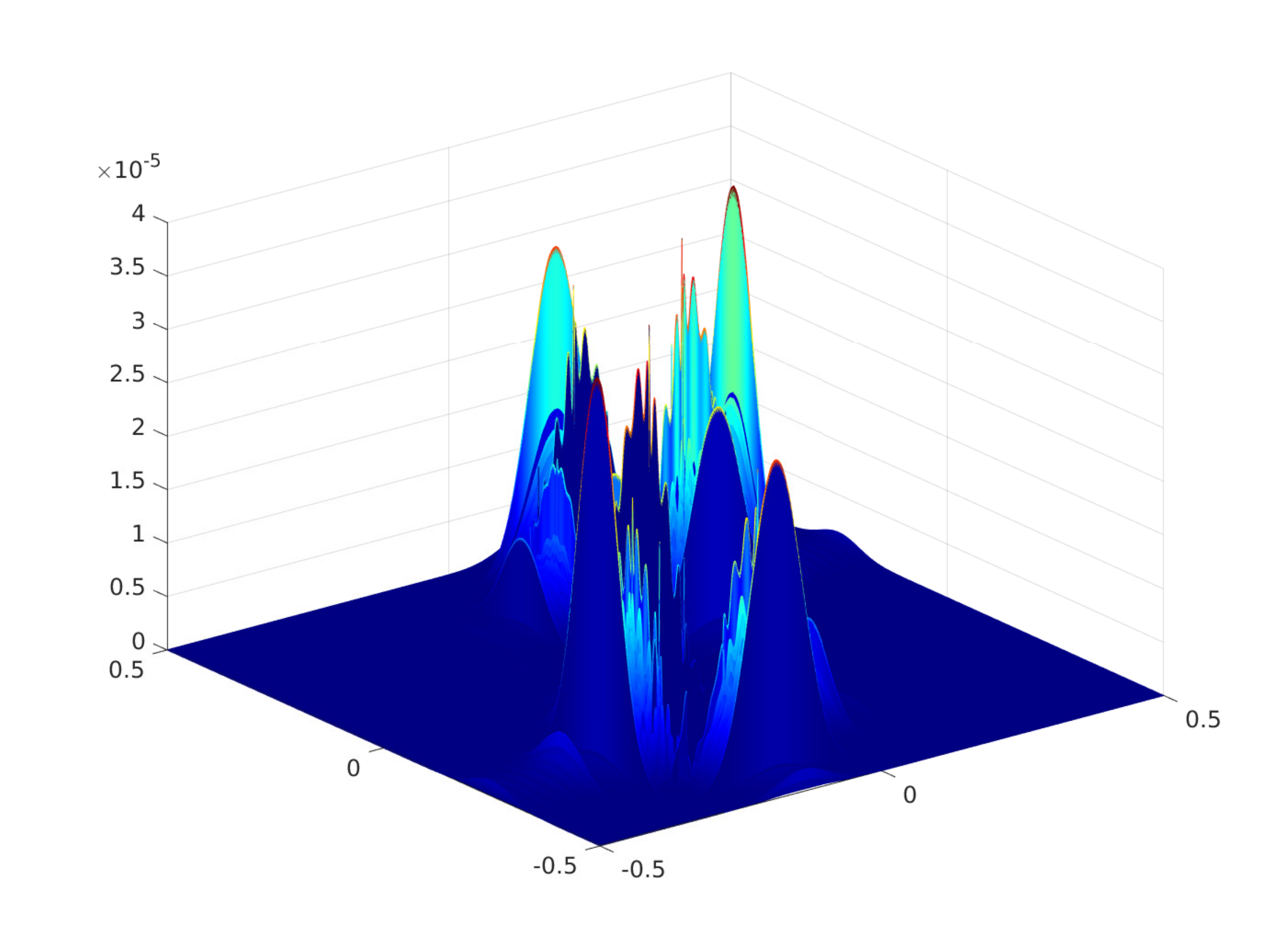}
\hspace{0.4in}
\includegraphics[height=40mm]{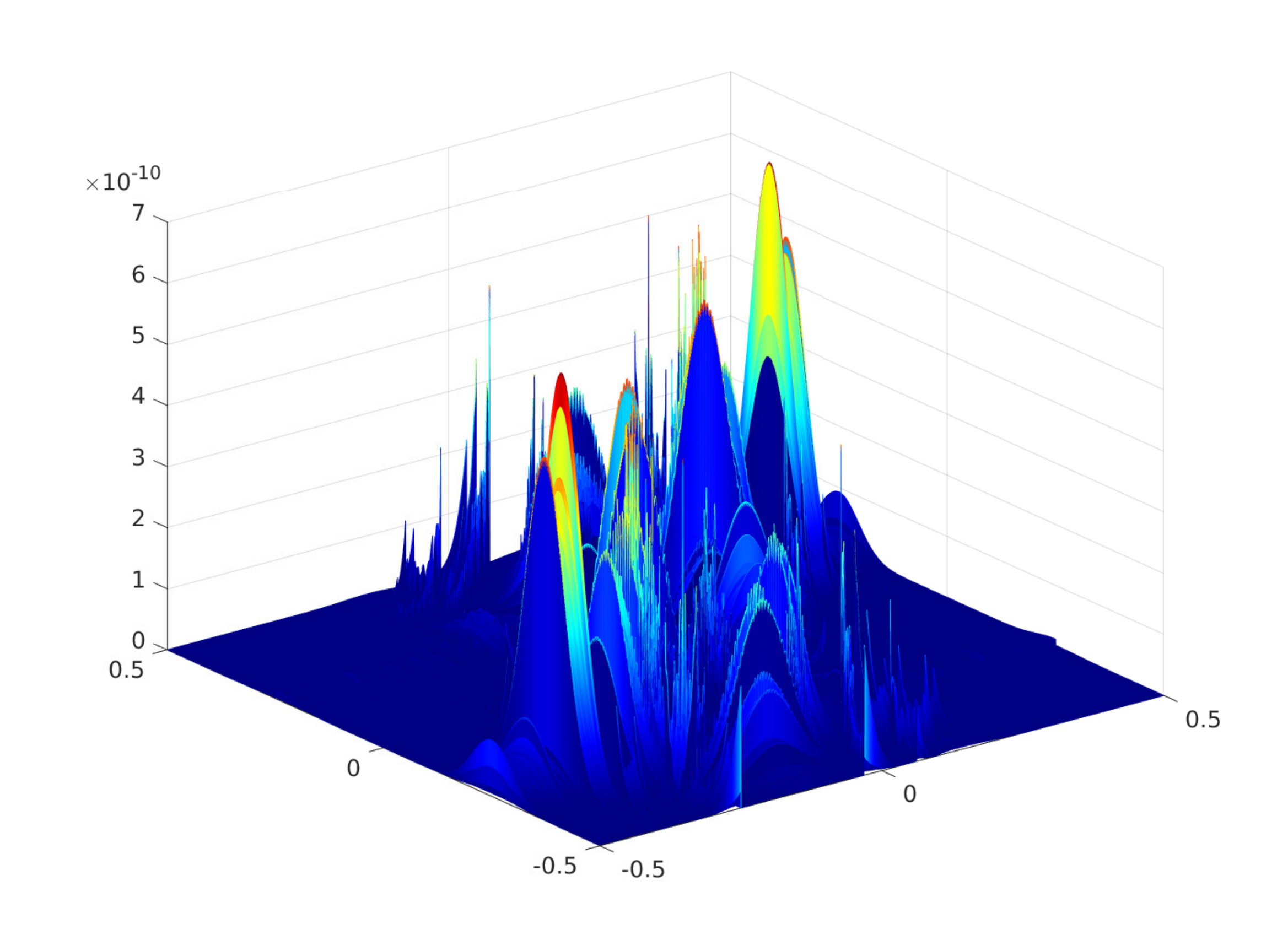}
\caption{\sf Illustration of the necessity of a new output 
  tree for potential evaluation at arbitrary targets by local 
    polynomial interpolation.
  Top left: the input data.
  Top right: the output potential (i.e., the Gauss transform of the input data).
  Middle left: the quadtree that resolves the input data to $12$ digits of accuracy.
  Middle right: the quadtree that resolves the output potential to $12$ digits
  of accuracy.
  Bottom left: the absolute errors of the potentials at four million equispaced
  target points using the input quadtree ($y$-axis in $10^{-5}$ scale);
  Bottom right: the absolute errors of the potentials at four million equispaced
  target points using the correct output quadtree ($y$-axis in $10^{-10}$ scale).}
\label{boxmove}
\end{figure}

The refinement algorithm, guaranteeing that the output is resolved to any requested precision,
is straightforward (\cref{alg5}).

\begin{algorithm}[t]
  \caption{Tree refinement}
{\em On input, we have an adaptive,
 level-restricted tree $T_0$ with $N_0$ boxes, 
that resolves the density $\sigma$ and for
which we have computed the potential $u$ on a tensor product grid
of order $k$ for each leaf node, using \cref{alg4}.
On output, we obtain a
new adaptive tree $T_1$ obtained by recursive subdivision ensuring that
the potentials on each leaf node are well approximated by tensor product 
polynomial approximation to the prescribed precision $\varepsilon$.}
\begin{algorithmic}
    \State \multiline{$\sbtr$ Create an integer array ${\sf ifrefine}$ of 
sufficient length $N_{\rm max} \gg N_0$ and initialize
it to zero.}
    \For{box $i=1,\ldots,N_0$}
    \If{$B_i$ is a leaf box}
    \State \multiline{$\sbtr$ Convert function values at the tensor product grid to the
           corresponding  polynomial expansion using \cref{polycost}.}
    \State $\sbtr$ Calculate the estimated approximation error $e(B_i)$ 
via \eqref{funerr}.
    \If{$e(B_i)>{\varepsilon}$}
    \State $\sbtr \, {\rm ifrefine}(B_i)=1$.
    \End
    \End
    \End
    \For{box $i=1,2,\ldots$} 
    \If{(${\rm ifrefine}(B_i)=1$)}
    \State $\sbtr$ Divide $B_i$ into $2^d$ child boxes and append them to the 
tail of the box list.
    \If{($f(B_i)=1$) from \cref{fflagdef}},
    \State $\sbtr$ Translate the plane-wave expansion $\bpsi(B_i)$ to its children.
    \End
    \State \multiline{$\sbtr$ Evaluate the translated expansion at the tensor product grid 
     on each child using $T_{\rm pw \to pot}$ and add direct contributions
     using $T_{pol\to pot}$ if present.}
    \End
    \End
    \State $\sbtr$ If desired, rebalance the tree so that it is level-restricted.
  \end{algorithmic}\label{alg5}
\end{algorithm}
\begin{remark}
 \cref{alg5} may require operators such as $T_{pol\to pot}$ for 
boxes which do not satisfy the level-restricted property. Since
the tables are of dimension $k \times k$, the additional cost is negligible.
Note that the maximum number
of refinements for a leaf box at level $l$ is $l_{\rm max}-l$.
That is, maximum tree refinement can never be {\em deeper} than the tree 
resolving the source data. 
\end{remark}

\begin{remark}
  The refinement algorithm requires precomputation of more local tables than those in
  Definitions \ref{pol2potdef}--\ref{pol2potdef3}. Let $N_{\rm refine}$ denote the maximum number of refinements. A careful calculation shows that the total number of local tables
  needed is bounded by $2^{N_{\rm refine}+2}\cdot (N_{\rm refine}+3)+1$. Furthermore, instead
  of splitting the local tables into three categories as in Definitions
  \ref{pol2potdef}--\ref{pol2potdef3}, one may place all local tables into one big array
  and retrieve the correct index of the table for a given pair of source and target boxes
  by calculating the ratio of the distance between their centers to
  $\tilde{L} = L/2^{N_{\rm refine}+2}$, with $L$ the side length of the source box.
\end{remark}

While \cref{boxmove} suggests that refinement can create a large number of new boxes,
the mollifying behavior of the Gaussian also permits tree
{\em coarsening} if the function is deemed to be over-resolved.
The algorithm for tree coarsening is also simple and presented in~\cref{alg6}.

\begin{algorithm}[t]
{\em On input, we are given an adaptive tree $T_0$ on which the potential is 
resolved to the prescribed precision, but may be over-resolved.
On output, we have a coarsened tree $T_1$ that has fewer boxes but still 
resolves the input function.}
  \caption{\label{alg6} Tree coarsening}
  \begin{algorithmic}
    \For{level $l=l_{\rm max}-1, 0, -1$} 
    \For{each non-leaf box $B_i$ at level $l$}
    \If{all child boxes of $B_i$ are leaf boxes} 
   \State \multiline{$\sbtr$ Convert function values at the tensor product grid to the
           corresponding polynomial expansion using \cref{polycost}.}
    \State \multiline{$\sbtr$ Evaluate the potential at the $k^d$ tensor product grid points
          on $B_i$ based on which child the grid point is in.}
   \State \multiline{$\sbtr$ Convert function values at the grid points on $B_i$ to the
           corresponding polynomial expansion using \cref{polycost}.}
    \State $\sbtr$ Compute the error estimate $e(B_i)$ using \eqref{funerr}.
    \State $\sbtr$ If $e(B_i)\le \varepsilon$, delete the children of $B_i$ from the 
    tree and set $B_i$ to be a \\ \hspace{0.6in} leaf node.
    \End
    \End
    \End
    \State $\sbtr$ If desired, rebalance the tree so that it is level-restricted.
  \end{algorithmic}
\end{algorithm}

\section{Periodic boundary conditions} \label{sec:periodic}
Periodic boundary conditions are easily implemented as follows. 
When $l_c\le 1$, i.e., when $\delta$
is sufficiently large, the plane-wave expansion of the periodic Gaussian 
$G_p$ on the entire box $B$ (the root node) requires a modest
number of terms and can be used directly.
When $l_c\ge 2$, 
the influence of distant image boxes at the relevant scale is exponentially
small and one can simply modify the definition of the
{\em colleague} list by include one layer of image boxes around 
$B$ at the cutoff level. Lists $L_D$ and $L_P$ also require slight 
modification to take the additional neighbors into account.

\section{Numerical results} \label{sec:numer}
In this section, we illustrate the performance of the new FGTs
for discrete and continuous sources. 
The code is written in Fortran and compiled using gfortran 9.4.0 with 
optimization flag -O3.
All experiments were run in single-threaded mode on a 2.40GHz
Intel(R) i9-10885H CPU. 
We will present numerical results in two and three dimensions, 
because the one-dimensional results don't reveal any additional interesting
features. 

\subsection{Point FGT experiments}

From the cost breakdown discussed just before the estimate
\eqref{pfgtcost},
it is easy to see that the performance of the point FGT depends 
on $\delta$, which controls the tree depth and therefore
the total number of boxes (and hence the average number of points per 
leaf box).
This dependence can be largely attributed to outgoing to incoming plane wave translations (gather expansions step in~\cref{alg2})
which requires $O(3^d N_{box} N_F)$ work.
This is true for virtually all tree-based algorithms, including 
FGTs and FMMs. An exception is 
the one-dimensional FGT based on the sum-of-exponential approximation 
in \cite{jiang2022cicp}, which can proceed without a tree data structure,
using the ordering of points on the line instead.

As a result, one can get excessively optimistic performance estimates
by fixing $\delta$ and increasing $N$ alone.
A more telling experiment is to reduce $\delta$ as we increase $N$, which 
we do following the approach used 
in~\cite{jiang2022cicp} for two dimensions and
in~\cite{spivak2010sisc} for three dimensions, where the points
are placed on a curve or surface, respectively so that adaptive refinement
is necessary.
In two dimensions, we
fix $N \delta = 400$ and let $N$ vary from $5,000$ to
$20,000,000$.
In three dimensions, we use the pairs
\cite{spivak2010sisc} 
\begin{align*} 
&(\delta=0.01,N=10^6),
(\delta=0.007,N=3.4 \times 10^6), 
(\delta=0.005,N=8 \times 10^6), \\
&(\delta=0.004,N=15.6 \times  10^6),
(\delta=0.003,N=27 \times 10^6).
\end{align*}
The corresponding
timings are presented in \cref{pfgtN}. 

\begin{figure}[th]
\centering
\includegraphics[height=40mm]{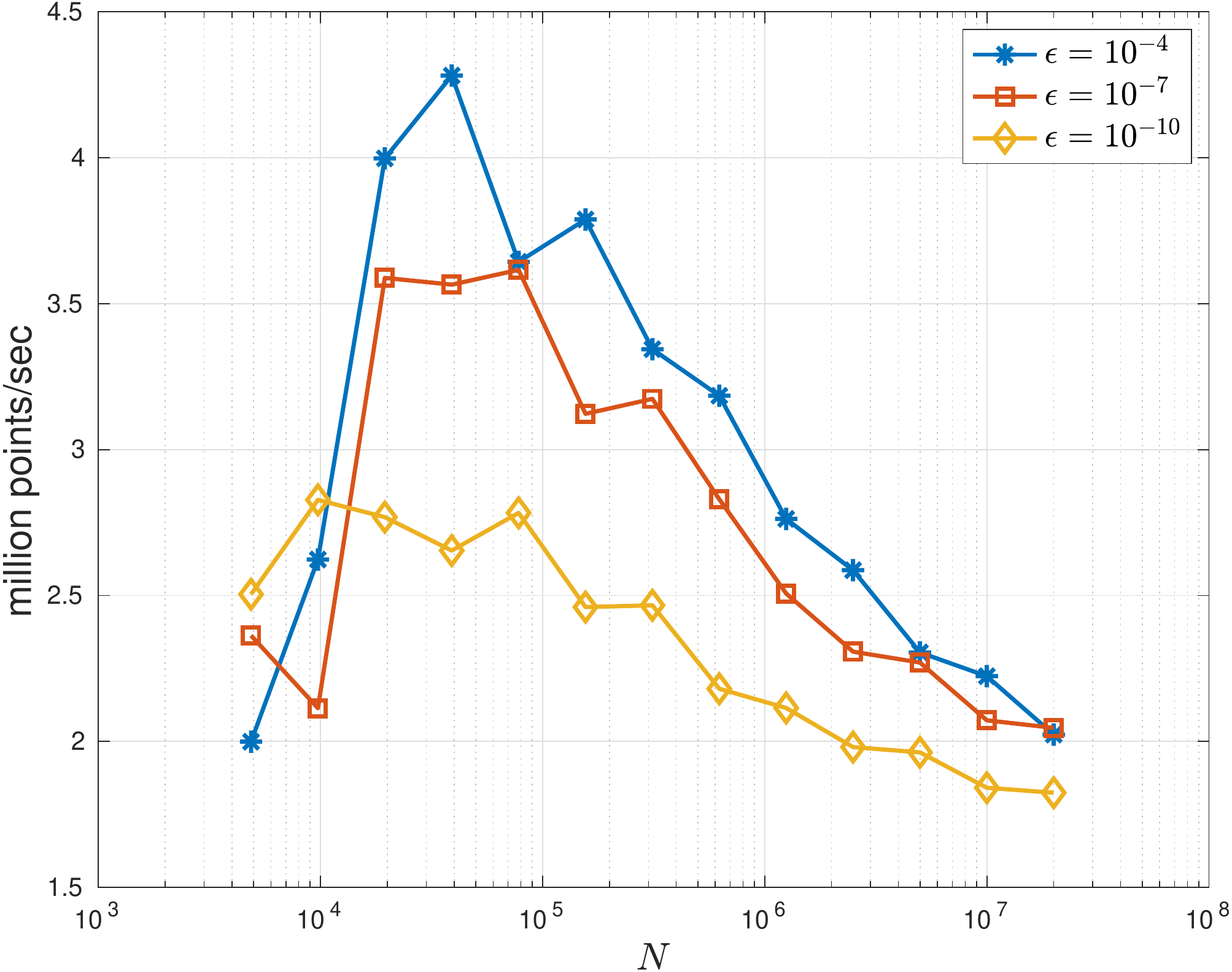}
\hspace{0.4in}
\includegraphics[height=40mm]{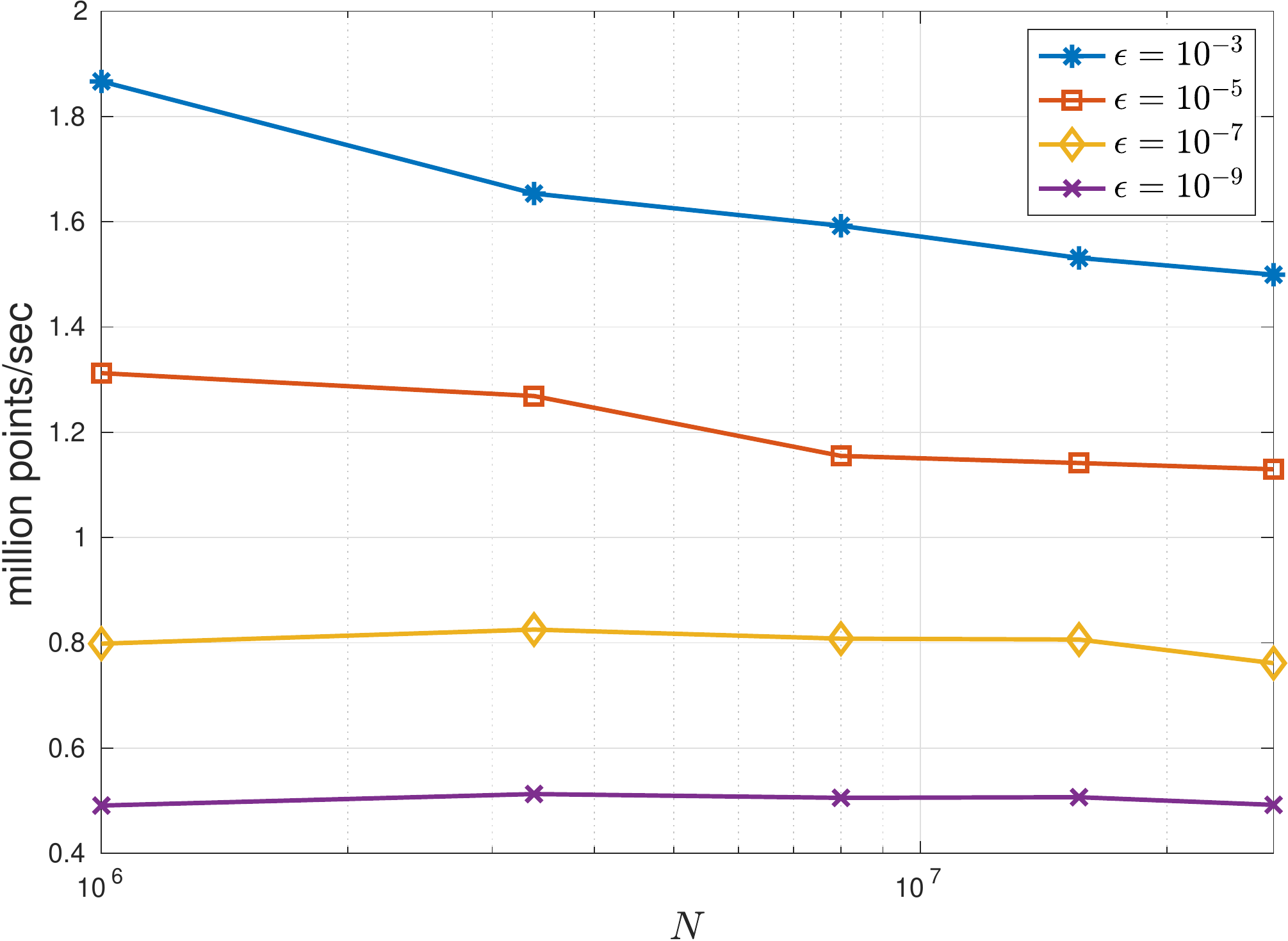}
\caption{\sf Linear scaling of the point FGT. 
$N$ here is the total number of points and the 
$y$-axis indicates the ``throughput" of the FGT measured
  in units of million points/second. 
(Left) a 2D problem set up to correspond to Fig. 11 in \cite{jiang2022cicp},
with points on a curve.
(Right) a 3D problem setup to match Tables 4.1-4.4 in \cite{spivak2010sisc},
with points distributed on a surface.
}
\label{pfgtN}
\end{figure}

A second informative experiment is to 
fix the number of source points and vary $\delta$. For this, we let
$N=10^7$ and let $\delta$ range from $10^{-10}$ to $10^{-1}$.
We set the target points to be the sources themselves. 
(Evaluating the field at additional
target points comes at very little cost.) 
In both the two-dimensional and three-dimensional case,
we distribute the sources either uniformly in the unit box
$B_d\in \mathbb{R}^d$  or uniformly on the surface of the perturbed 
sphere $S_{d-1}\in \mathbb{R}^d$ (which is highly nonuniform in the 
ambient space).
Timing results are shown in \cref{pfgt2d} and \cref{pfgt3d}.
\begin{figure}[th]
\centering
\includegraphics[height=40mm]{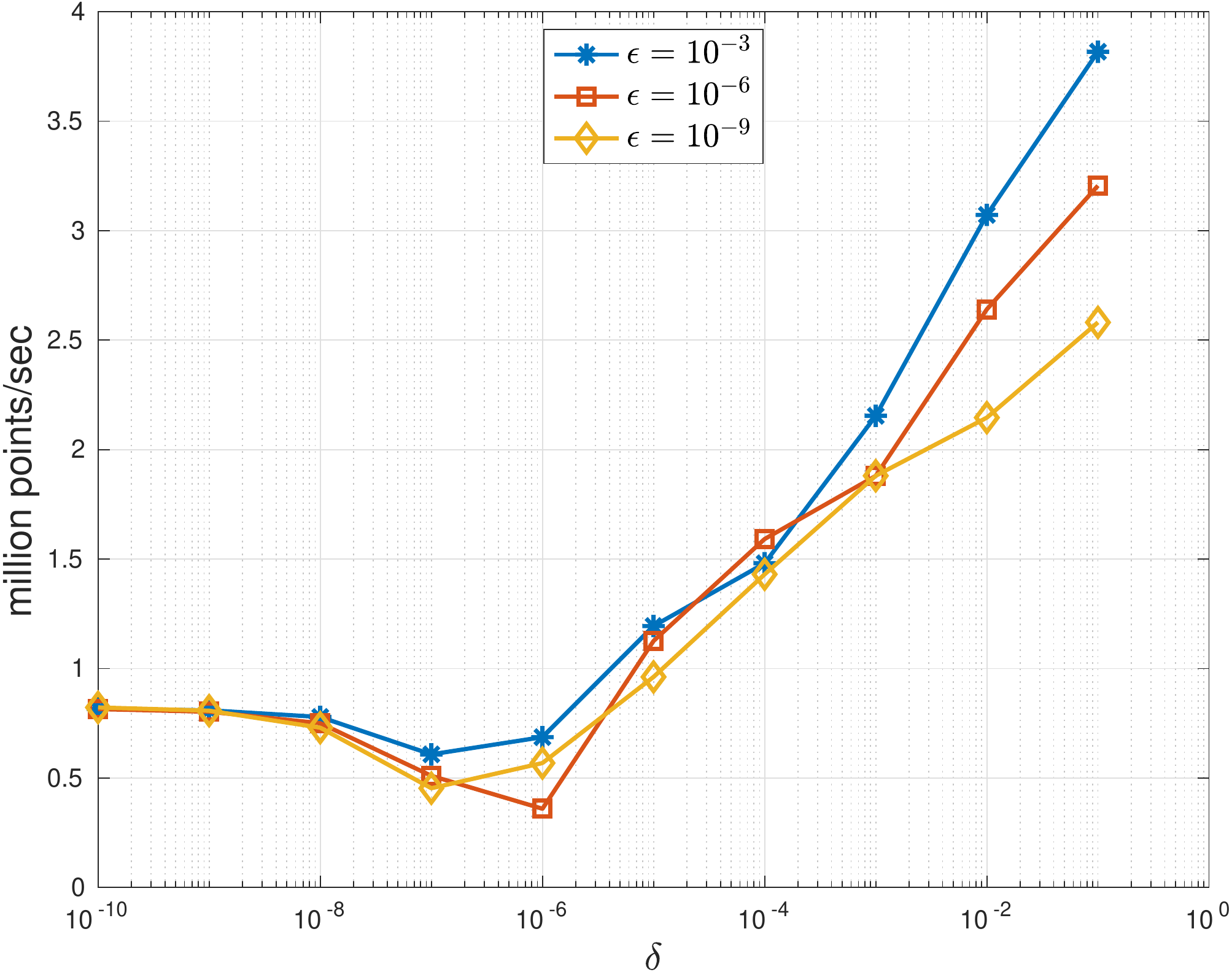}
\hspace{0.4in}
\includegraphics[height=40mm]{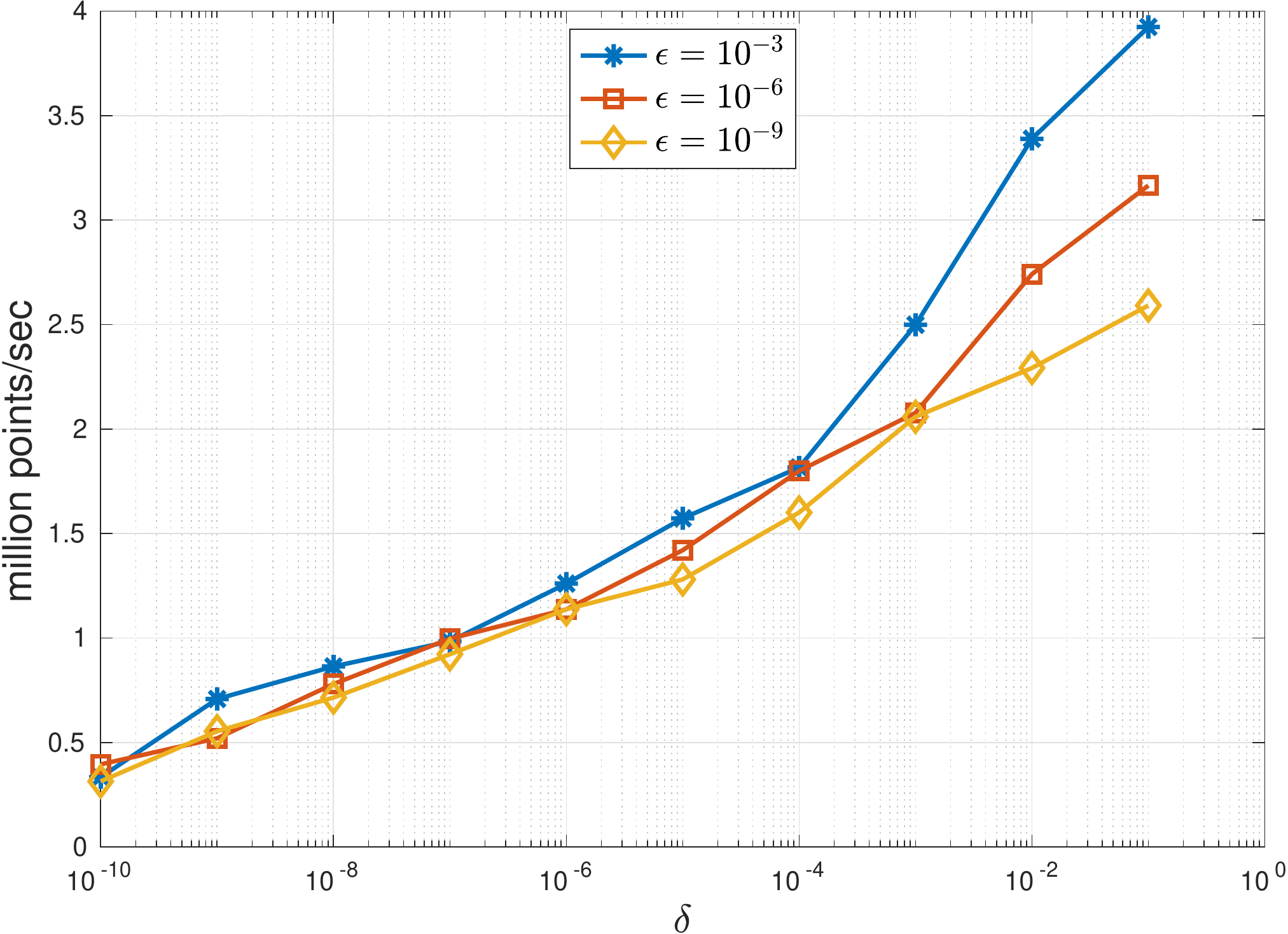}
\caption{\sf Performance of the 2D point FGT 
as a function of Gaussian variance, with throughput measured in units of
  million points per second:
(left) a uniform distribution, (right)
a highly nonuniform distribution.}
\label{pfgt2d}
\end{figure}
\begin{figure}[th]
\centering
\includegraphics[height=40mm]{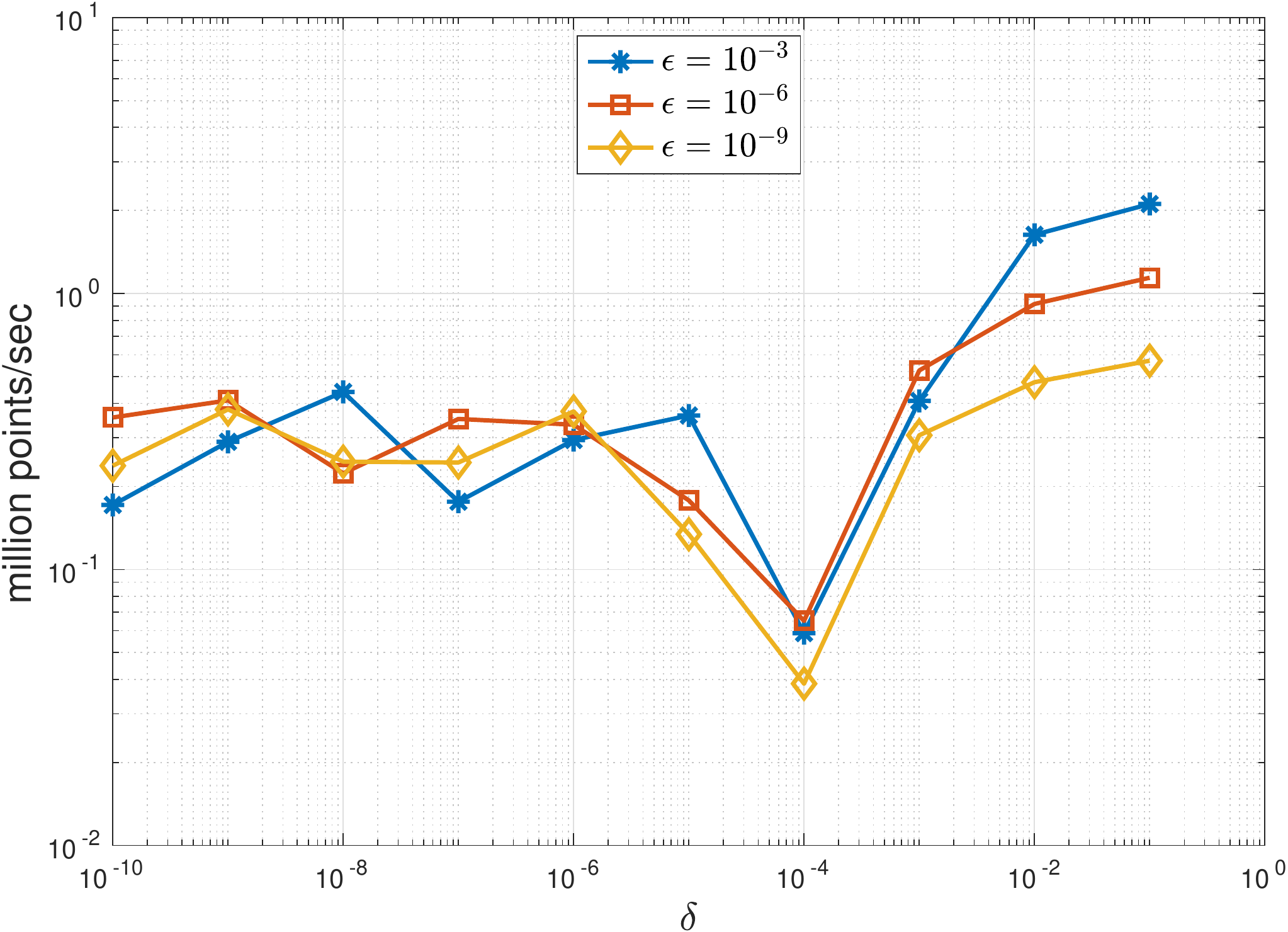}
\hspace{0.4in}
\includegraphics[height=40mm]{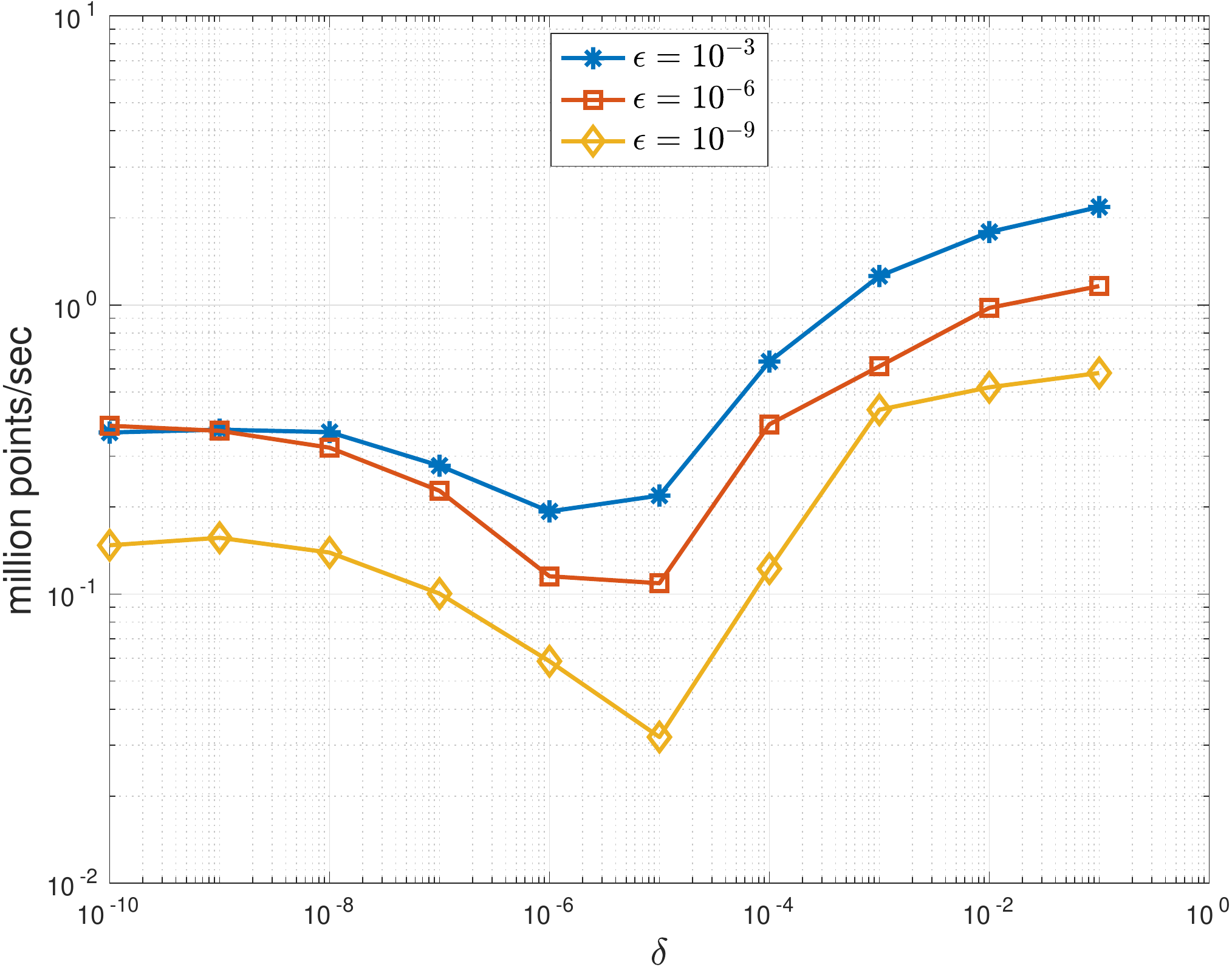}
\caption{\sf Performance of the 3D point FGT as a function of 
Gaussian variance, with throughput measured in units of
million points per second:
(left) a uniform distribution, (right)
a highly nonuniform distribution.}
\label{pfgt3d}
\end{figure}

\begin{remark}
The point FGT can be further accelerated - first by more careful application
of the NUFFT (optimizing the FFTW call to make sure it only
carries out the ``plan" phase once), and second by using special purpose
libraries that accelerate the evaluation of exponential functions. 
\end{remark}

\subsection{Box FGT experiments}

We now investigate the performance of the FGT for continuous sources 
(the ``box" FGT). In our experiments, we fix
the polynomial approximation order to be $k=16$, and construct the tree 
so that the input data is resolved to $10$ digits of accuracy on each leaf 
box.  The precomputation of transformation/translation matrices 
in \cref{alg4} takes
approximately $1$--$3$ milliseconds, a negligible cost for the problems
we consider here.

When considering periodic boundary conditions, we assume the density takes the
form
\be\label{sigmap}
\sigma_p(\x)=\prod_{\substack{i=1\\ i\, {\rm odd}}}^d\sin(2\pi n_p x_i)\prod_{\substack{i=2\\ i\, {\rm even}}}^d \cos(2\pi n_p x_i),
\quad \x\in\mathbb{R}^d.
\ee
Refinement in this case leads to a uniform tree (and the analytical expression for the potential is available by Fourier analysis).
For our free space examples, we define the density as the sum of several
sharply peaked Gaussians
\be\label{sigmaf}
\sigma_f(\x)=\sum_{i=1}^{n_f} e^{-\|\x-\x_i \|^2/\alpha_i}.
 \quad \x\in\mathbb{R}^d
\ee
For $d=3$, we let
\be\label{xi3d}
\ba
\left[\x_1,\ldots,\x_5\right] &= [(-0.3, -0.4, -0.06), (-0.2,0,-0.25),\\
  & (0.18, -0.1,-0.03),(-0.09, 0.3,0.17), (-0.38, -0.05,-0.17)], 
\ea
\ee
and for $d=2$, we let $\x_i$  be the projections of the points in \eqref{xi3d}
to the $x_1x_2$ plane. We specify the corresponding
$\alpha_i$'s below.
The expression for the potential, given the density~\eqref{sigmaf},
is easily obtained analytically.

We first demonstrate the linear scaling of the box FGT, fixing
$\delta=10^{-3}$, 
For the periodic box FGT in 2D, we use the density ~\eqref{sigmap} 
with $n_p=8,16,32,64$. 
For the free-space box FGT in 3D,
we use the density ~\eqref{sigmaf} and increase the number of 
Gaussians $n_g$ from $2$ to $5$
with $\alpha_1=10^{-2}/n_g$ and $\alpha_i=\alpha_1/i$ for $i=2,3,4,5$. 
This forces the use of more and more points to resolve the density
as $n_g$ increases.
\cref{bfgtN} shows our results, with the throughput measured
in million points per second for various precisions.
\begin{figure}[th]
\centering
\includegraphics[height=40mm]{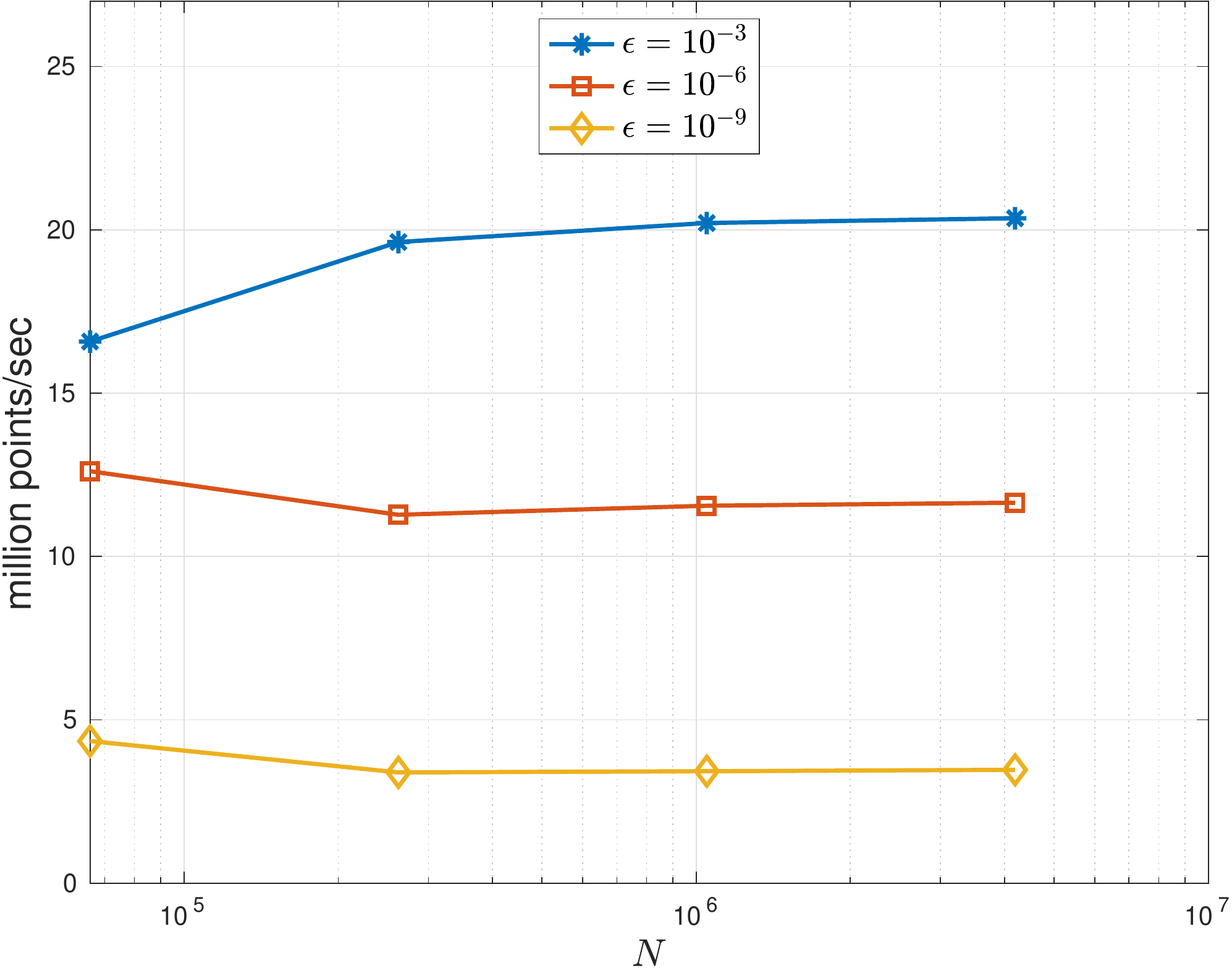}
\hspace{0.4in}
\includegraphics[height=40mm]{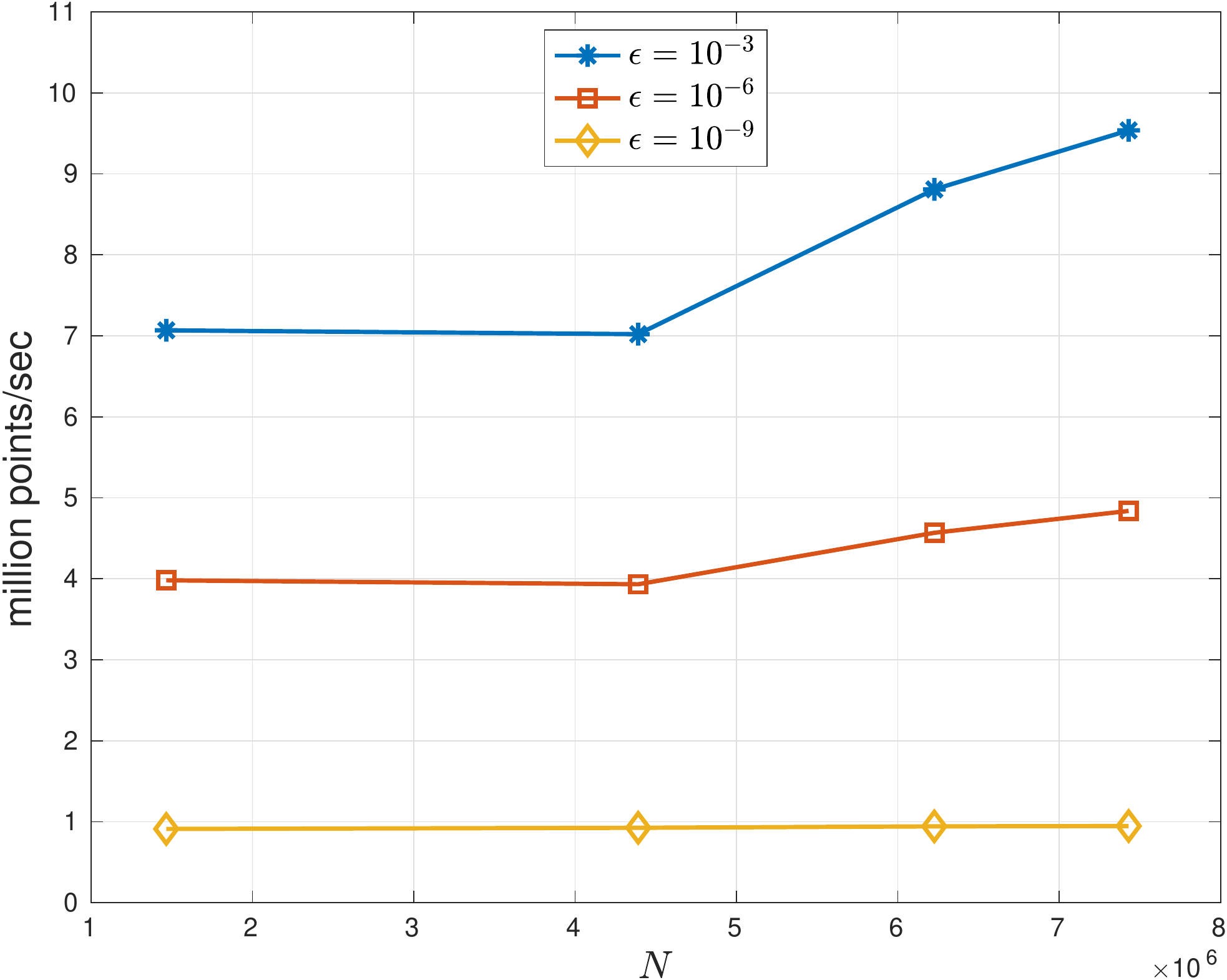}
\caption{\sf Linear scaling of the box FGT. 
Here, $N$ is the total number of tensor
product grid points in the tree. The $y$-axis indicates
the throughput of the box FGT measured
in units of million points/second:
(left) a 2D periodic example, (right) a 3D free-space example.
}
\label{bfgtN}
\end{figure}

To study the effect of the Gaussian variance $\delta$,
we compute the continuous periodic Gauss transform with
density given by~\eqref{sigmap}, where $n_p=32$ for $d=2$ 
and $n_p=4$ for $d=3$.
For the free-space case, we use the density \eqref{sigmaf} with $n_g=5$.
We set $\alpha_1=10^{-5}$ for $d=2$ and $\alpha_1=4\cdot 10^{-3}$ for $d=3$.
We set $\alpha_i=\alpha_1/i$ for $i=2,3,4,5$ for both $d=2$ and $d=3$.
We then vary $\delta$ from $10^{-10}$ to $1$. 
The timings are shown in \cref{bfgt2d} and \cref{bfgt3d} 
for $d=2$ and $d=3$, respectively.
Note that that the box FGT becomes very
fast for either very small or very large values of $\delta$. 
In the first case, all interactions are direct
and even the 1D transformation matrices $T_{pol\to pot}$ become sparser
and sparser as $\delta \rightarrow 0$. On the other hand,
when $\delta$ is very large, only a few plane-wave expansions are needed
for the entire computational domain.
\begin{figure}[th]
\centering
\includegraphics[height=40mm]{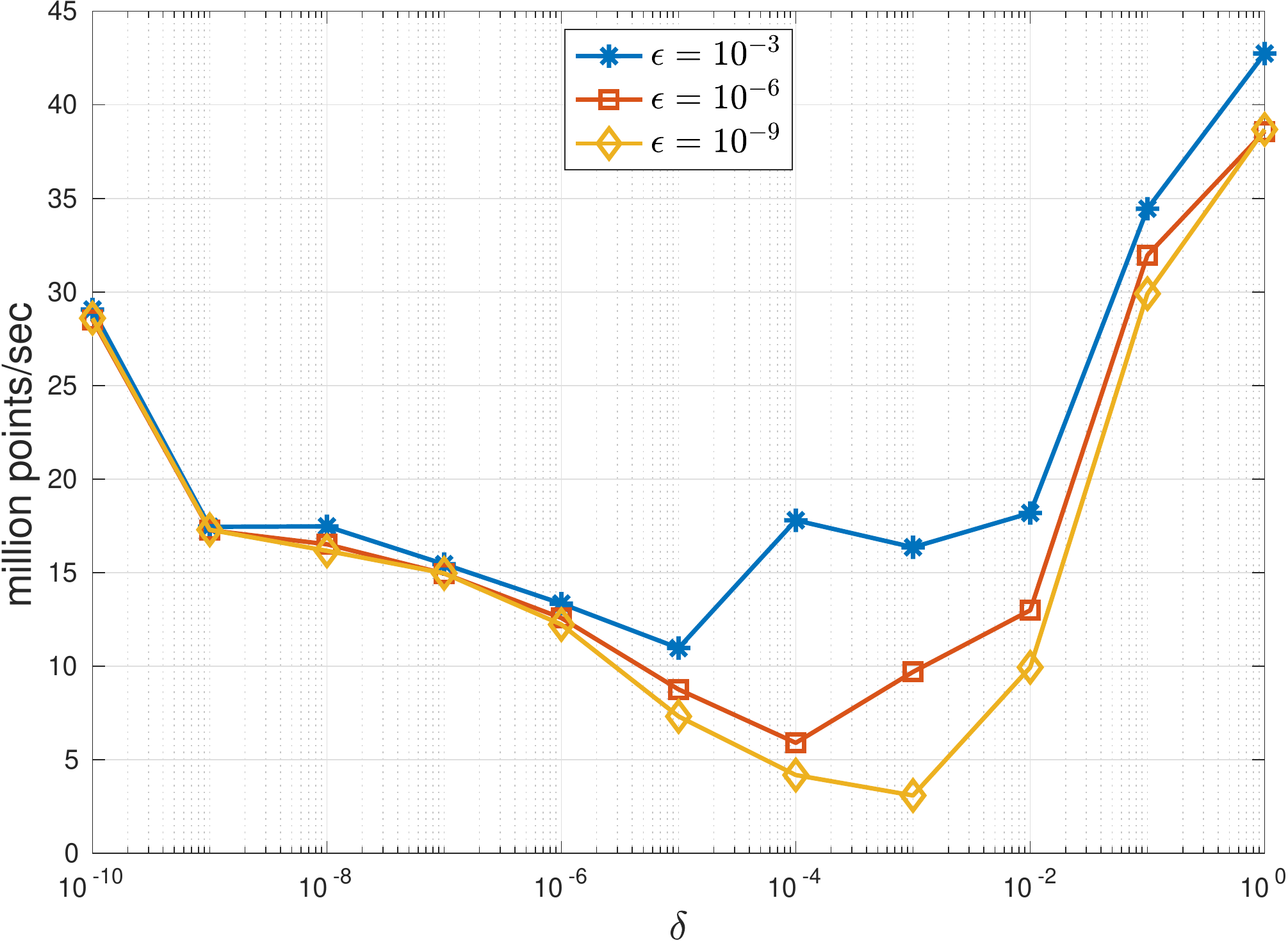}
\hspace{0.4in}
\includegraphics[height=40mm]{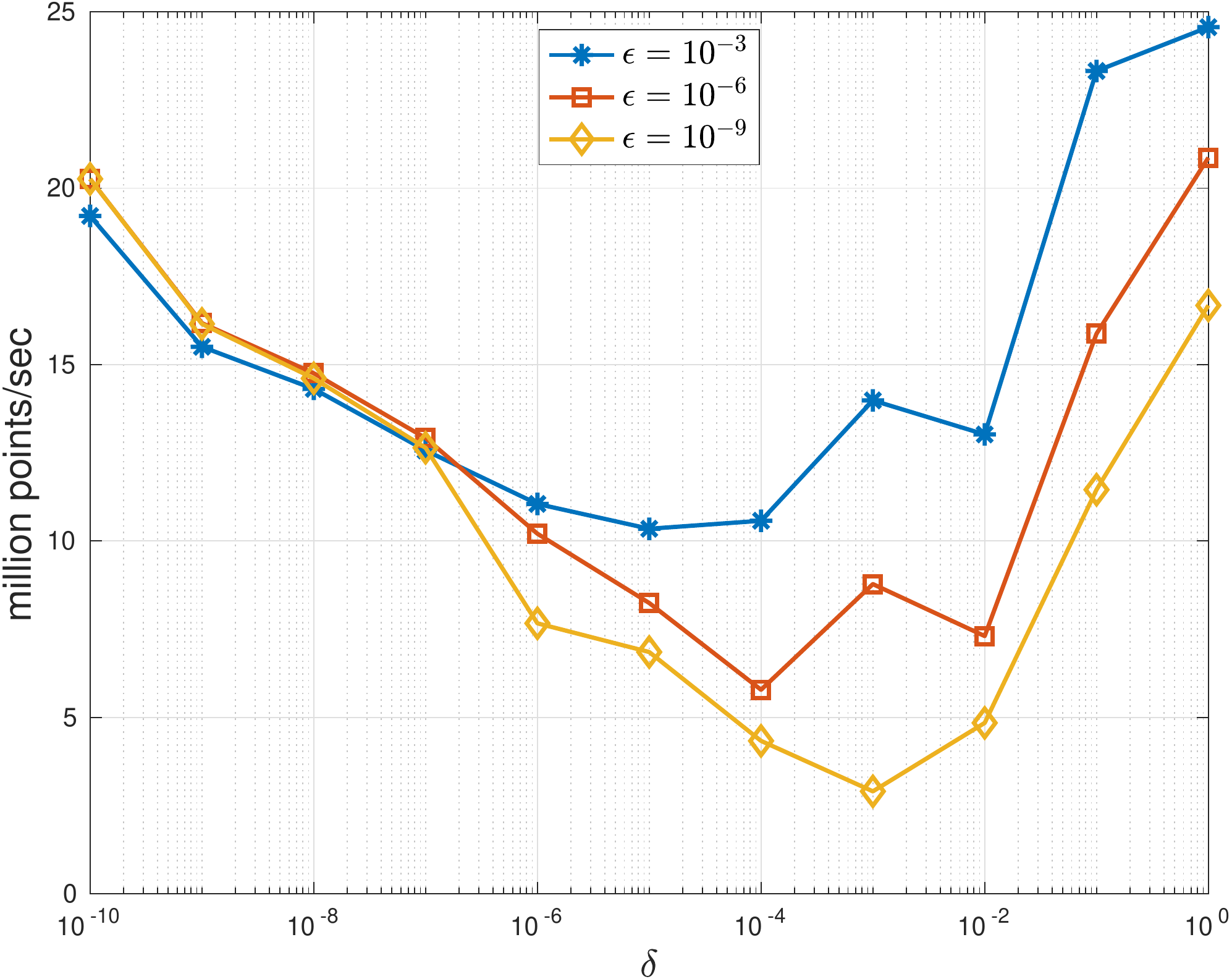}
\caption{\sf Performance of the 2D box FGT as a function of Gaussian variance:
  (left) periodic Gauss transform of the density \eqref{sigmap} with
  about one million points,
  (right) free-space Gauss transform of the density \eqref{sigmaf}
  with about a quarter of a million points.}
\label{bfgt2d}
\end{figure}
\begin{figure}[th]
\centering
\includegraphics[height=40mm]{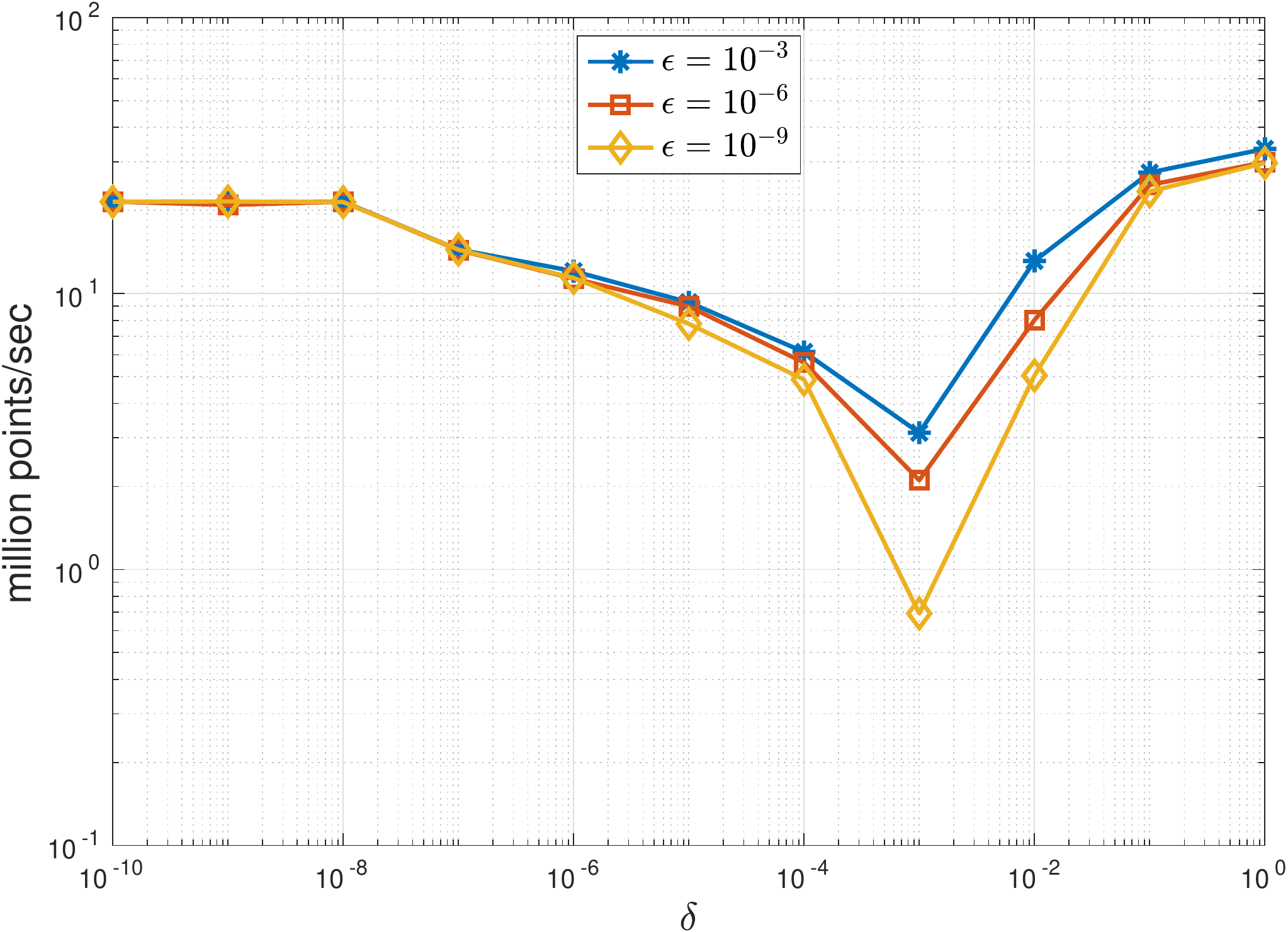}
\hspace{0.4in}
\includegraphics[height=40mm]{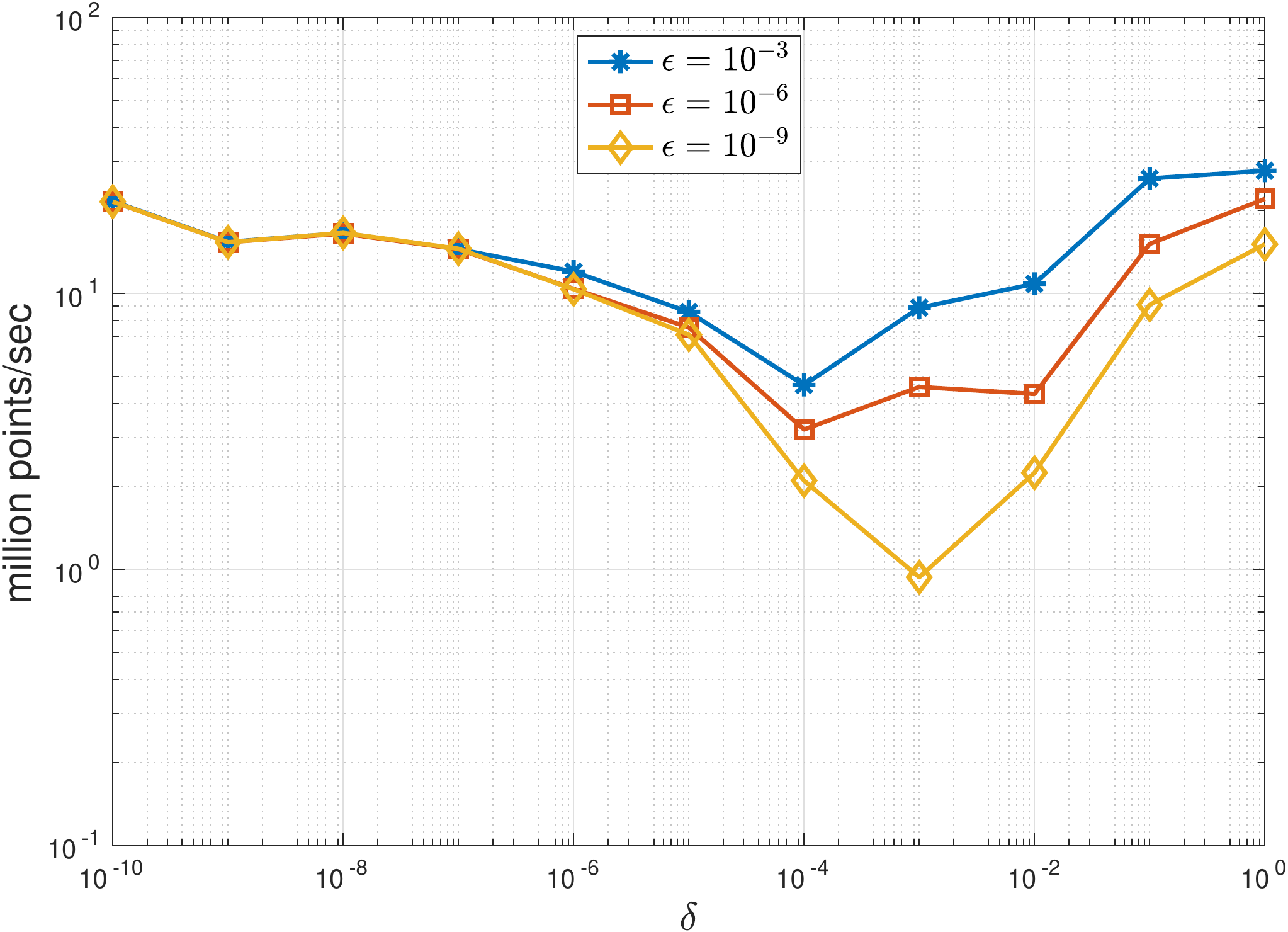}
\caption{\sf Performance of the 3D box FGT as a function of Gaussian variance:
  (left) periodic Gauss transform of the density \eqref{sigmap}
  with about two million points,
  (right) free-space Gauss transform of the density \eqref{sigmaf}
  with about six million points.}
\label{bfgt3d}
\end{figure}

\section{Conclusions and further discussions} \label{sec:conclusions}

We have developed a new version of the fast Gauss transform 
which combines the benefits of the single-level plane-wave based approach
of \cite{greengard1998nfgt,sampath2010pfgt,spivak2010sisc}
with hierarchical merging, as in 
\cite{wang2018sisc}. 
For discrete sources,
the scheme refines to a uniform cut-off level, and makes use of the NUFFT
to convert the source distribution in a cut-off level box to a plane-wave 
expansion. 
For continuous
sources, discretized on adaptive tensor product grids, separation of variables 
allows for the direct mapping of the source density to the 
plane-wave representation at even greater speeds.
We have limited ourselves to dimensions $d \leq 3$ but the algorithm could
be pushed a little further although it does not avoid the curse of 
dimensionality.
We did not consider one-dimensional examples in any detail, partly 
because ordering of the unknowns on the real line
permits the use of the SOE (sum-of-exponentials)-based scheme 
\cite{jiang2022cicp}. This is even faster for 
1D problems but, as it turns out, not as fast as the present scheme for
$d>1$.

There are a number of computational kernels which can be approximated
as a sum of Gaussians (see, for example, \cite{beylkin2005,beylkin2010}) and 
the FGT described here helps accelerate the corresponding integral
transforms. We are currently integrating the new FGT into solvers for 
time-dependent diffusion problems. We also plan to release an open source software
package for the new scheme in the near future. 

Finally, we note that the current implementation is not fully optimized for
modern CPUs. For example, no use is made of SIMD vectorization 
in any part of the code, including the FINUFFT library~\cite{finufftlib}.
It is, in fact, possible to remove the dependence on NUFFTs entirely, even in 
the point FGT, by replacing the original source distribution in a leaf node
with an equivalent tensor product
grid of ``proxy points," as in \cite{pfmm}. Transformation to the 
plane-wave basis can then be carried out in tensor product form
with level-3 BLAS acceleration. Such low-level modifications are likely
to increase the throughput of the point FGTs over the data presented in
\cref{sec:numer}, even though the reduction in terms of operation
count is small.

\bibliographystyle{siam}
\bibliography{journalnames,fgt}

\end{document}